\newtheorem{thm}{Theorem}[section]
\newtheorem{lem}[thm]{Lemma}
\newtheorem{prop}[thm]{Proposition}
\theoremstyle{definition}
\newtheorem{defn}[thm]{Definition}
\theoremstyle{remark}
\newtheorem{rem}[thm]{Remark}
\numberwithin{equation}{section}
\newcommand{\be}{\begin{equation}}
\newcommand{\ee}{\end{equation}}
\newcommand{\R}{\mathbb R}
\newcommand{\eps}{\varepsilon}
\newcommand{\p}{\partial}
\newcommand{\comment}[1]{}
\begin{document}

\title[Nonlocal phase transitions]{Rigidity of minimizers in nonlocal phase transitions} %
\author{O. Savin}
\address{Department of Mathematics, Columbia University, New York, NY 10027}
\email{\tt  savin@math.columbia.edu}

\begin{abstract}

We obtain the classification of certain global bounded solutions for semilinear nonlocal equations of the type
$$\triangle^s u=W'(u) \quad \mbox{in} \quad \R^n, \quad \quad \mbox{with} \quad s \in (1/2 ,1),$$
where $W$ is a double well potential. 
\end{abstract}

\maketitle

\section{Introduction}

In this article we extend to the case of the fractional Laplacian $\triangle ^s$ with $s \in (1/2 ,1)$ the results from \cite{S1}, \cite{S2} concerning a conjecture of De Giorgi about the classification of certain global bounded solutions for semilinear equations of the type
$$\triangle u=W'(u),$$
where $W$ is a double well potential. 

We consider the Ginzburg-Landau energy functional with nonlocal interactions
$$J(u, \Omega)=\frac14 \int_{\R^n \times \R^n \setminus ( \mathcal C \Omega \times \mathcal C \Omega)} 
 \frac{ (u(x)-u(y))^2}{|x-y|^{n+2s}} \,dxdy+ \int_\Omega W(u) \, dx, $$ with $|u|\le 1$. Here $W$ is a double-well
potential with minima at $1$ and $-1$ satisfying
$$W \in C^2([-1,1]), \quad W(-1)=W(1)=0, \quad \mbox{$W>0$ on $(-1,1)$},$$
$$W'(-1)=W'(1)=0, \quad W''(-1)>0, \quad W''(1)>0.$$
The
classical double-well potential $W$ to have in mind is
$$W(s)=\frac{1}{4}(1-s^2)^2.$$

Physically $u \equiv- 1$ and $u \equiv 1$ represent the stable ``phases". A critical function for the energy $J$ corresponds to a phase transition with nonlocal interaction between these states, and it satisfies the Euler-Lagrange equation
$$\triangle ^s u=W'(u),$$
where $\triangle^s u$ is defined as
$$\triangle ^s u(x)= PV \int_{\R ^n} \frac{u(y)-u(x)}{|y-x|^{n+2s}} \, dy . $$ 

Our main result provides the classification of minimizers with asymptotically flat level sets.

\begin{thm} \label{TM}
Let $u$ be a global minimizer of $J$ in $\mathbb{R}^n$ with $s \in (\frac 12 ,1)$. If the $0$ level set $\{u=0\}$ is asymptotically flat at $\infty$,
then $u$ is one-dimensional.
\end{thm}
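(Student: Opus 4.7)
The plan is to follow the blueprint developed by Savin for the classical case in \cite{S1}, \cite{S2}, adapting each step to the nonlocal setting. The overall strategy reduces the theorem to two main ingredients: (i) an improvement-of-flatness theorem for the level sets of minimizers, and (ii) a 1D rigidity statement saying that if $\{u=0\}$ is a hyperplane, then $u$ is itself one-dimensional. The hypothesis that the $0$-level set is asymptotically flat at infinity, combined with iteration of (i), will force flatness at every scale and every point, producing a hyperplane level set. The restriction $s \in (\frac 12 ,1)$ is essential: in this range the functional $J$, after the natural parabolic rescaling, $\Gamma$-converges to a multiple of the classical perimeter functional, so at large scales minimizers behave like classical minimal surfaces.

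The heart of the proof is the improvement-of-flatness step, which I would formulate as follows: there exist universal constants $\eps_0>0$ and $\eta \in (0,1)$ such that if $u$ is a minimizer in a ball $B_R \subset \R^n$ with $\{u=0\}\cap B_R \subset \{|x\cdot e|\le \eps R\}$ for some unit vector $e$ and some $\eps \le \eps_0$, then there is a unit vector $e'$ with $|e'-e|\le C\eps$ such that $\{u=0\}\cap B_{\eta R} \subset \{|x\cdot e'|\le \frac{\eps}{2}\eta R\}$. I would prove this by a compactness/contradiction argument: assume a sequence of minimizers $u_k$ with flatness $\eps_k \to 0$ at scale $R_k$ violating the conclusion, rescale the normal direction by $1/\eps_k$, and use density estimates together with the $\Gamma$-convergence to the classical perimeter to show that the rescaled level sets converge uniformly to the graph of a function $w$ defined on a ball in $\{x\cdot e=0\}$. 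A Harnack-type inequality and the fact that $w$ solves a linear limiting equation (of minimal-surface type) then yield $C^{1,\alpha}$ regularity for $w$, producing the tilted flatter strip at scale $\eta R$ and the required contradiction.

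Given the improvement of flatness, the asymptotic flatness at infinity lets me iterate at an arbitrarily large scale to conclude that, modulo a rotation, $\{u=0\}$ lies in a strip of width $o(R)$ in every $B_R$, and then iterating downwards yields that $\{u=0\}$ is contained in a hyperplane. At this point the 1D reduction is carried out by a sliding argument: the 1D layer solution $g$ of $\triangle^s g=W'(g)$ along the normal direction exists, is unique up to translation, and is strictly monotone. Comparing translates of $g$ with $u$ using the maximum principle for $\triangle^s$ shows $u\equiv g(x\cdot e)$, so $u$ is one-dimensional.

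The main obstacle will be step (i), the improvement-of-flatness theorem, and in particular the nonlocality of $\triangle^s$. Unlike the local case, barriers and perturbation arguments cannot be localized; the full contribution from $\R^n$ must be tracked when sliding 1D profiles or building subsolutions over a flat level set, and the compactness in the rescaling step requires uniform-in-$\eps$ energy and density estimates that respect the long-range tail of the kernel $|x-y|^{-(n+2s)}$. The range $s>\frac12$ is precisely what makes the tail integrable enough for the macroscopic limit to be the classical perimeter and for the limit equation on $w$ to be a well-behaved elliptic problem; adapting the Harnack inequality of \cite{S1} to this nonlocal framework is where the bulk of the technical work will lie.
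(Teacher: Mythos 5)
Your proposal follows the same high-level roadmap as the paper: an improvement-of-flatness lemma proved by compactness, a Harnack inequality for flat level sets, a $\Gamma$-convergence input, iteration to conclude the zero level set is a hyperplane, and a final 1D reduction by sliding the layer solution. Two points deserve comment, the second of which is a genuine gap.

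On method: you propose to work directly with $\triangle^s$, whereas the paper passes to the Caffarelli--Silvestre extension $U(x,y)$ in $\R^{n+1}_+$ solving $\mathrm{div}(y^a\nabla U)=0$ with a Neumann-type boundary reaction. The paper explicitly notes the extension is a computational convenience rather than an essential idea, so this is not a conceptual disagreement, though the extension makes the barrier and Harnack computations much more tractable (they become local elliptic arguments with weight $y^a$).

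The genuine gap is in the formulation of the improvement-of-flatness statement. You state it purely locally: flatness of $\{u=0\}$ in $B_R$ improves to flatness of $\{u=0\}$ in $B_{\eta R}$. In the nonlocal setting this statement does not close under iteration: the subsolution/sliding comparisons you run inside $B_{\eta R}$ feel the behavior of $u$ on all of $\R^n$ through the kernel tail, and a hypothesis restricted to $B_R$ gives no control there. The paper's version (Theorem 6.1, \texttt{c1alpha}) carries, in both hypothesis and conclusion, an additional geometric condition at a much larger scale $q=(l^2\theta^{-1})^{1-\sigma/2}\gg l$: all balls of radius $q$ tangent from below (resp. above) to the flat cylinder $\mathcal C(l,\theta)$ must lie in $\{u<0\}$ (resp. $\{u>0\}$), and the same condition, with the rescaled $\bar q$, must be propagated to the output cylinder $\mathcal C_\xi(\bar l,\bar\theta)$. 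This is exactly the ``geometric restriction outside the flat cylinder'' the paper flags in the introduction, and it is what lets the barrier arguments in Propositions \texttt{p1}, \texttt{p2} be localized to $\mathcal B_{R^{1-\sigma}}^+$. You flag nonlocality as a difficulty to be absorbed into the proof, but it in fact forces a change in the shape of the statement; without it the lemma is neither provable by sliding barriers nor iterable. A secondary, smaller issue: in your compactness step you want $\Gamma$-convergence plus density estimates to give uniform convergence of the anisotropically rescaled level sets to a Hölder graph $w$, with the Harnack inequality cited afterward as a regularity input for $w$. In fact $\Gamma$-convergence gives only $L^1$-type convergence; the uniform (Hölder) convergence of the anisotropic rescalings is precisely the output of the Harnack inequality (Theorem \texttt{TH}), so the logical order of those two steps is reversed in your proposal.
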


A more quantitative version of Theorem \ref{TM} is given in Theorem \ref{c1alpha}. 

In a subsequent work we will treat also the case $s=\frac 12$ which requires some modifications of the methods presented in this paper. We remark that Theorem \ref{TM} when $s \in (0, \frac 12)$ was obtained recently by Dipierro, Valdinoci and Serra \cite{DVS}.  

It is known that blowdowns of the level set $\{u=0\}$ have different behavior depending on the value of $s$.
 If $ s\ge 1/2$, there are sequences $\eps_k \{u=0\}$ with $\eps_k \to 0$ that converge uniformly on compact sets to a minimal surface and, if $s< 1/2$ they converge to a $s$-nonlocal minimal surface.
 This follows from a $\Gamma$-convergence result together with a uniform density estimate of level sets of minimizers which were obtained by the author and Valdinoci in \cite{SV1}, \cite{SV2}, see for example Corollary 1.7 in \cite{SV1}.

From the classification of global minimal surfaces in low dimensions we find that the level sets of minimizers of $J$ are always asymptotically flat at $\infty$ in dimension $n \le 7$ if $ s\ge 1/2$, and we obtain the following corollary of Theorem \ref{TM}.

\begin{thm}\label{7min}
A global minimizer of $J$ is one-dimensional in dimension $n \le 7$ if  $s \in (\frac 12 ,1)$.
\end{thm}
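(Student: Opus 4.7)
The plan is to reduce Theorem \ref{7min} to Theorem \ref{TM} by verifying, in dimension $n \le 7$, the asymptotic flatness hypothesis on $\{u=0\}$. The idea is to compose the $\Gamma$-convergence/density estimate package of Savin--Valdinoci with the classical regularity theory for area-minimizing hypersurfaces in low dimension.

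First, fix a global minimizer $u$ of $J$ in $\R^n$ with $n \le 7$ and $s \in (\tfrac 12,1)$. Pick an arbitrary sequence $\eps_k \to 0$ and consider the rescaled level sets $\eps_k \{u=0\}$. By Corollary 1.7 of \cite{SV1} (invoked in the excerpt), since $s \ge 1/2$, a subsequence converges locally uniformly on compact subsets of $\R^n$ to a set $E$ whose boundary $\partial E$ is a minimizer of the classical perimeter functional in every bounded open set; in particular $\partial E$ is a global minimizing boundary in $\R^n$. The uniform density estimates of \cite{SV1}, \cite{SV2} applied to the level sets of $u$ are what prevent the limit from collapsing and what upgrade weak convergence to locally uniform Hausdorff convergence, so the flat convergence inherited from $\Gamma$-convergence really does describe the geometric behaviour of $\{u=0\}$ at large scales.

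Second, invoke the Bernstein-type theorem for minimizing boundaries: in $\R^n$ with $n\le 7$, every global perimeter-minimizing boundary is a hyperplane (Fleming, De Giorgi, Almgren, Simons, completed by Bombieri--De Giorgi--Giusti). Applied to $\partial E$, this forces the blowdown limit to be a hyperplane $\Pi$. Since the choice of $\eps_k$ was arbitrary, any limit along any subsequence is a hyperplane, so up to passing to subsequences the level set $\{u=0\}$ is trapped in flatter and flatter slabs around $\Pi$. This is precisely the ``asymptotically flat at infinity'' hypothesis of Theorem \ref{TM}.

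Third, apply Theorem \ref{TM} to conclude that $u$ is one-dimensional. The only non-routine step is matching the notion of asymptotic flatness used as a hypothesis in Theorem \ref{TM} with the type of blowdown convergence provided by \cite{SV1}, \cite{SV2}; this is the main obstacle, but it is controlled by the uniform density estimates which guarantee that the convergence of $\eps_k\{u=0\}$ to $\partial E = \Pi$ takes place in the Hausdorff sense on compact sets, which is the strong form of flatness needed. Everything else is a citation of previously established facts.
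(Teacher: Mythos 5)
Your proposal is correct and takes essentially the same route as the paper: the paper also obtains Theorem \ref{7min} as a corollary of Theorem \ref{TM} by combining the $\Gamma$-convergence and uniform density estimates of \cite{SV1}, \cite{SV2} (which give locally uniform convergence of blowdowns $\eps_k\{u=0\}$ to a global perimeter-minimizing boundary when $s\ge 1/2$) with the classification of global minimizing boundaries as hyperplanes in dimension $n\le 7$. One small point of phrasing: the asymptotic flatness hypothesis of Theorem \ref{TM} allows the unit vector $\xi_k$ to change with $k$, so there is no need to fix a single hyperplane $\Pi$; each scale $l_k$ comes with its own direction $\xi_k$, which is exactly what subsequential convergence delivers.
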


Another consequence of Theorem \ref{TM} is the following version of De Giorgi's conjecture to the fractional Laplace case.

\begin{thm}{\label{8min}}
Let $u \in C^2(\mathbb{R}^n)$ be a solution of
\begin{equation}{\label{8min1}}
\triangle^s u = W'(u),
\end{equation}
with $ s\in (1/2,1)$, such that
\begin{equation}{\label{8min2}}
|u| \le 1, \quad \partial_n u>0, \quad \lim_{x_n \to \pm
\infty}u(x',x_n) =\pm 1.
\end{equation}

Then $u$ is one-dimensional if $n \le 8$.

\end{thm}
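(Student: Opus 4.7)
The plan is to reduce Theorem \ref{8min} to Theorem \ref{TM} by verifying the two missing ingredients: that $u$ is a global minimizer of $J$, and that its zero level set is asymptotically flat at infinity.

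\textbf{Step 1: monotone solutions are minimizers.} First I would show that any $u$ satisfying \eqref{8min2} is a global minimizer of $J$ with respect to compact perturbations. The key input is that the family of vertical translates $u_t(x',x_n) := u(x', x_n + t)$ is a monotone foliation of the strip $\{|u|<1\}$: each $u_t$ solves \eqref{8min1}, has the same limits at $x_n \to \pm \infty$, and $u_{t_1} < u_{t_2}$ when $t_1 < t_2$. A sliding/foliation argument based on the comparison principle for $\triangle^s$, analogous to the classical one in \cite{S1}, \cite{S2}, then forces $u$ to minimize $J$ in every ball among competitors that coincide with $u$ outside the ball. This step is the main technical obstacle in the reduction, since the nonlocal interaction terms make the comparison less immediate than in the local setting.

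\textbf{Step 2: blowdowns are minimal graphs.} Using $\partial_n u > 0$, the zero level set $\{u = 0\}$ can be written as the graph $x_n = \varphi(x')$ of a continuous function on $\mathbb{R}^{n-1}$. By the $\Gamma$-convergence result combined with the uniform density estimates of \cite{SV1}, \cite{SV2} quoted in the introduction, any blowdown sequence $\varepsilon_k \{u=0\}$ converges, along a subsequence, uniformly on compact sets to a minimal surface $\Sigma \subset \mathbb{R}^n$ (here we use $s \in (\tfrac12,1)$, so the limit is a classical minimal surface, not a nonlocal one). Since each rescaled level set is a graph in the $x_n$ direction with a uniform one-sided barrier, the limit $\Sigma$ is again a graph over $\mathbb{R}^{n-1}$.

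\textbf{Step 3: Bernstein in low dimensions and conclusion.} By the Bombieri--De Giorgi--Giusti theorem, every entire minimal graph over $\mathbb{R}^{n-1}$ with $n - 1 \le 7$, i.e. $n \le 8$, is an affine hyperplane. Hence $\Sigma$ is a hyperplane, and since every blowdown limit is flat, the level set $\{u = 0\}$ is asymptotically flat at infinity in the sense required by Theorem \ref{TM}. Applying Theorem \ref{TM} to the minimizer $u$ from Step 1 yields that $u$ is one-dimensional. Thus the heart of the proof is Step 1; once the minimality of monotone solutions is secured, Steps 2 and 3 are an essentially mechanical combination of the density/$\Gamma$-convergence tools already cited and the classical Bernstein theorem.
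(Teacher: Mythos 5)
Your reduction is precisely the one the paper uses: it cites Theorem~1 of \cite{PSV} for Step~1 (the sliding/foliation argument you outline to get global minimality from monotonicity and the $\pm1$ limits), and then, since $\{u=0\}$ is a graph in the $e_n$ direction, appeals to the classification of minimizing blowdowns of monotone graphs (Bernstein--Bombieri--De~Giorgi--Giusti, together with the $\Gamma$-convergence and density estimates of \cite{SV1,SV2}) to conclude asymptotic flatness for $n\le 8$, at which point Theorem~\ref{TM} finishes the proof. Your proposal is correct and takes essentially the same route; the only cosmetic difference is that the paper compresses Steps 2--3 into a single sentence.
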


Theorem \ref{7min} and Theorem \ref{8min} without the limit assumption in \eqref{8min2} have been proved in 2 and 3 dimensions using stability inequality methods. In dimension $ n=3$ and for $s \ge 1/2$  they have been established by Cabre and Cinti \cite{CC2},  and in dimension $n=2$ for all $s \in (0,1)$ by Sire and Valdinoci \cite{SiV}, see also \cite{CC1}, \cite{CS2}, \cite{CSo}.

It is not difficult to show that the $\pm 1$ limit assumption implies that $u$ is a global minimizer in $\mathbb{R}^n$, see for example Theorem 1 in \cite{PSV}. Since $\{u=0\}$ is a graph, it is asymptotically flat in dimension $n \le 8$ and Theorem \ref{TM} applies.

Similarly we see that if
the $0$ level set is a graph in the $x_n$ direction
 which has a one sided linear bound at
  $\infty$ then the conclusion is true in any dimension.

\begin{thm}
 If $u$ satisfies (\ref{8min1}), (\ref{8min2}), and $$\{u=0\} \subset \{x_n < C(1+|x'|)\},$$ and $ s\in (\frac 12 ,1)$ then $u$ is one-dimensional.
\end{thm}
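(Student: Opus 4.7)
The plan is to reduce the claim to Theorem \ref{TM} by showing that under the one-sided linear growth hypothesis the 0-level set of $u$ is asymptotically flat at $\infty$. As noted after Theorem \ref{8min}, the $\pm 1$ limits in \eqref{8min2} force $u$ to be a global minimizer of $J$ in $\R^n$, and the strict monotonicity $\p_n u>0$ combined with these limits makes $\{u=0\}$ the graph of a continuous function $g\colon\R^{n-1}\to\R$; the hypothesis then reads $g(x')\le C(1+|x'|)$.

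To verify asymptotic flatness I would pass to blowdowns. By the $\Gamma$-convergence and uniform density estimates of \cite{SV1}, \cite{SV2}, any local Hausdorff subsequential limit of $R_k^{-1}\{u=0\}$ as $R_k\to\infty$ is the boundary of a minimizing set $E_\infty$, which the monotonicity formula for minimizers forces to be a cone. The invariance of $\{u>0\}$ under upward $e_n$-translations passes to the limit, so $E_\infty$ is a supergraph $\{x_n>g_\infty(x')\}$; the cone property forces $g_\infty$ to be $1$-homogeneous; and the growth bound passes to $g_\infty(x')\le C|x'|$. The scale-invariant two-sided density estimates for minimizers also rule out degeneracies of the form $g_\infty=-\infty$, since such a vertical fiber in $E_\infty$ would violate the lower density of $\{u_\infty<0\}$ near blowdown level-set points; combining with $1$-homogeneity gives the two-sided linear bound $|g_\infty(x')|\le C|x'|$ on $\R^{n-1}$.

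The remaining step is a Bernstein-type claim: an entire minimal graph on $\R^{n-1}$ with two-sided linear growth is affine. The Bombieri--De Giorgi--Miranda gradient estimate turns linear growth into a uniform bound on $|\nabla g_\infty|$, and Moser's theorem that an entire minimal graph on $\R^{n-1}$ with bounded gradient is a hyperplane completes the argument in every dimension. I expect the most delicate step to be obtaining the lower linear bound on $g_\infty$, since the hypothesis controls $g$ only from above; this is where the minimizing property of $u$, rather than merely its monotonicity in $x_n$ as in Theorem \ref{8min}, enters essentially. Once every blowdown of $\{u=0\}$ is a hyperplane, $\{u=0\}$ is asymptotically flat at $\infty$ and Theorem \ref{TM} yields that $u$ is one-dimensional.
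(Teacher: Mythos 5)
Your overall strategy is the natural expansion of the one-sentence remark the paper offers in place of a proof: deduce global minimality from the $\pm 1$ limits via \cite{PSV}, pass to a blowdown, use \cite{SV1}, \cite{SV2} (for $s>1/2$) to make the blowdown a minimizing perimeter cone $E_\infty$, note that the epigraph structure of $\{u>0\}$ and the bound $\{u=0\}\subset\{x_n<C(1+|x'|)\}$ force $E_\infty$ to be a supergraph $\{x_n>g_\infty(x')\}$ containing the solid cone $\{x_n>C|x'|\}$, and then invoke a Bernstein-type classification to conclude the cone is a half-space, so $\{u=0\}$ is asymptotically flat and Theorem \ref{TM} applies.

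The one step that does not hold as you state it is excluding $g_\infty=-\infty$ by the two-sided density estimates. Those estimates are perfectly compatible with $\partial E_\infty$ containing a vertical face: a piece of a hyperplane parallel to $e_n$ has volume density $1/2$ on either side, and a fan where $g_\infty$ drops to $-\infty$ across a cone of directions in $\R^{n-1}$ produces exactly such vertical portions without violating any density bound. What actually rules this out is the regularity theory for minimizing cones combined with the upward $e_n$-invariance and the inclusion $E_\infty\supset\{x_n>C|x'|\}$. That inclusion already forbids $E_\infty$ from being a cylinder in the $e_n$-direction. If $0$ were a singular point of $\partial E_\infty$, Federer dimension reduction would produce a splitting $E_\infty\cong C\times\R^{n-m}$ with $C\subset\R^m$ ($m\ge 8$) a minimizing cone having an isolated singularity; by the preceding remark the $\R^m$-component $\nu$ of $e_n$ is nonzero and $C$ is upward $\nu$-invariant, so the Jacobi field $N\cdot\nu$ (with $N$ the unit normal) on the smooth link $\partial C\cap S^{m-1}$ is one-signed, and the strong maximum principle forces it either to vanish identically (so $C$ splits off $\nu$, contradicting the isolated singularity) or to be nowhere zero (so $\partial C$ is a smooth conical graph, hence a hyperplane, again no singularity). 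Thus $0$ is a regular point of $\partial E_\infty$, and since a cone equals its tangent cone at the vertex, $\partial E_\infty$ is a hyperplane. This replaces your appeal to the Bombieri--De Giorgi--Miranda gradient estimate and Moser's theorem, which in any case are available only after one knows the blowdown is a smooth entire graph; once that point is settled the proof is complete and agrees in substance with the paper's remark.
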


Our proof of Theorem \ref{TM} follows closely the one for the classical Laplacian given in \cite{S2}. The main steps consist in 1) finding some appropriate families of radial subsolutions, 2) applying a version of weak Harnack inequality and 3) a $\Gamma$-convergence result. Some new technicalities are present in our setting due to the nonlocal nature of the equation. For example in the improvement of flatness property Theorem \ref{c1alpha}, we need to impose a geometric restriction to the level set $\{u=0\}$ possibly outside the flat cylinder $\mathcal C(l,\theta)$.

We prove Theorem \ref{TM} by making use of the extension property of the fractional Laplacian of Caffarelli-Silvestre \cite{CS}. Precisely we consider the extension $U(x,y)$ of $u(x)$ in $\R^{n+1}_+$ such that
$$div(y^{a}\nabla U)=0 \quad \mbox{in $\R^{n+1}_+$,} \quad \quad U(x,0)=u(x), \quad \quad a:=1-2s \in (-1,1),$$
and then $$\triangle ^s u(x)= c_{n,s} \lim_{y \to 0^+} y^a U_y(x,y),$$
with $c_{n,s}$ a constant that depends only on $n$ and $s$.
Then global minimizers of $J(u)$ in $\R^n$ with $|u| \le 1$ correspond to global minimizers of the ``extension energy" $\mathcal J(U)$ with $|U| \le 1$ where
$$ \mathcal J(U):= \frac{c_{n,s}}{2} \int |\nabla U|^2 y^a \, dx dy + \int W(u) dx.$$ 
After dividing by a constant and relabeling $W$ we may fix $c_{n,s}$ to be $1$. 
We obtain an improvment of flatness property for the level sets of minimizers of 
$\mathcal J$ which are defined in large balls $\mathcal B_R^+$, see Theorem \ref{c1alpha}. We remark that the principal use of the 
extension is to make the various subsolution computations easier to handle and it is not essential to the method of proof. 

The paper is organized as follows. In Sections 2 and 3 we introduce some notation and then construct a family of axial subsolutions. In Section 4 we provide certain ``viscosity solution" properties of the level set $\{u=0\}$. In Section 5 we obtain 
a Harnack inequality of the $0$ level set and in Section 6 we prove Theorem \ref{c1alpha}.  

\section{Notation and preliminaries}

We introduce the following notation:

\noindent
We denote points in $\R^n$ as $x=(x',x_n)$ with $x' \in \R^{n-1}$. The ball of center $z$ and radius $r$ is denoted by $B_r(z)$,
$$B_r(z):=\{x\in \R^n| |x-z| < r\}, \quad \quad B_r:=B_r(0).$$
The cylinder with base $l$ and height $\theta$ is denoted by $\mathcal C(l,\theta) \subset \R^n$
$$\mathcal C(l,\theta) := \{x|  \quad |x'| \le l, \quad |x_n| \le \theta \}.$$
Points in the extension variables $\R^{n+1}_+$ are denoted by $(x,y)$ with $y>0$, and the ball of radius $r$ as $\mathcal B_r^+$
$$\mathcal B^+_r :=\{(x,y) \in \R^{n+1}_+| \quad |(x,y)| <r \} \quad  \subset \R^{n+1}.$$
Given a function $U(x,y)$ we define $u$ its trace on $\{y=0\}$
$$u(x)=U(x,0).$$
Also let
$$a:=1-2s \in (-1,0),$$
and 
$$\triangle_a U := \triangle U + a\frac{U_y}{y}= y^{-a} div(y^a \nabla U),$$
$$\p_{y}^{1-a}U (x):=  \lim_{y \to 0^+} y^a U_y(x,y)=\frac {1} {1-a} \, \lim_{y \to 0^+} \, \, y^{a-1} \,  \left( U(x,y)-U(x,0)\right).$$
We define the energy $\mathcal J$ as 
$$ \mathcal J(U, \mathcal B_R^+):= \frac 12 \int_{\mathcal B^+_R} |\nabla U|^2 y^a \, dx dy + \int_{B_r} W(u) dx,$$
and a critical function $U$ for $\mathcal J$ satisfies the Euler-Lagrange equation
\be\label{eq}
\triangle_a U=0, \quad \quad \p_{y}^{1-a}U=W'(u).
\ee

In \cite{PSV} Theorem 2, see also \cite{CS1}, it was proved the existence and uniqueness up to translations of a global minimizer of $\mathcal J$ in 2D which is increasing in the first variable and which has limits $\pm 1$ at infinity. Precisely there exists a unique $G:\R^2_+ \to (-1,1)$ that solves the equation \eqref{eq} such that $G(t,y)$ is increasing in the $t$ variable and its trace $g(t):=G(t,0)$ satisfies
$$g(0)=0 , \quad \lim_{t \to \pm \infty} g(t)= \pm 1.$$
Moreover, $g$ and $g'$ have the following asymptotic behavior 
$$1-|g| \sim \min\{ 1, |t|^{-2s}\}, \quad \quad g' \sim \min \{ 1, |t|^{-1-2s} \},$$
and since $a \in (-1,0)$ we have $\mathcal J(G, \R^2_+) < \infty.$

Since $\triangle_a G_t=0$ and $G_t \ge 0$, we easily conclude that
\be\label{NG}
|\nabla G| \le C \min\{1,r^{-1}\}, \quad \quad G_t \ge c \, r^{-1-2s}.
\ee
where $r$ denotes the distance to the origin in the $(t,y)$-plane.

In Theorem \ref{c1alpha} we show that the only global minimizer of $\mathcal J$ that has asymptotically flat level sets on $y=0$ is $G(x_n,y)$ up to translations and rotations.
\

For simplicity of notation we assume that $W$ is uniformly convex outside the interval $[g(-1),g(1)]$.

Constants that depend on $n$, $s$, $W$, $G$ are called universal constants, and we denote them by $C$, $c$. 
In the course of the proofs the values of $C$, $c$ may change from line to line when there is no possibility of confusion. If the constants depend on other parameters, say $\theta$, $\rho$, then we denote them by $C(\theta, \rho)$ etc. 

\section {2D barriers}

In this section we construct two families of comparison functions $G_R$ and $\Psi_R$ which are perturbations of the solution $G$.

\begin{lem}[Radial subsolutions]\label{rs1}
For all large $R$, there exist continuous functions $G_R:\R^2 \to (-1,1]$, and $\delta>0$ small, $C$ large universal constants such that:

\ 

1) $G_R=1$ outside $\mathcal B_{R^{1-\delta}}^+ \cup \left ( (-\infty,0] \times [0,R^{1-\delta}] \right )$, 

\

2) $G_R(t,y)$ is nondecreasing in $t$, and $\p_t G_R=0$ outside $\mathcal B_{R^{1-\delta}}^+$,

\

3) $$|G_R-G| \le \frac {C}{R} \quad \mbox{in $ \mathcal B_4^+$},$$

\

4)
$$\triangle_a G_R+ \frac{2(n-1)}{R}\,  | \nabla G_R| \le 0, $$
and on $y=0$: 
$$\partial_y^{1-a} G_R < W'(G_R) \quad \quad \mbox{if $t \notin [-1,1]$.}  $$

\end{lem}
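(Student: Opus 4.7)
The plan is to construct $G_R$ as a perturbation of the 2D transition profile $G$ so that the radial lift $V(x,y):=G_R(|x-x_0|-R,\,y)$, for $x_0\in\R^n$ with $|x_0|=R$, is a supersolution of $\triangle_a U=0$ in $\R^{n+1}_+$. The chain rule gives
\begin{equation*}
\triangle_a V \;=\; \triangle_a G_R + \frac{n-1}{|x-x_0|}\,G_{R,t},
\end{equation*}
and in the region of interest $|x-x_0|\in[R-R^{1-\delta},\,R+R^{1-\delta}]$ one has $\frac{n-1}{|x-x_0|}\le \frac{2(n-1)}{R}$, so the condition in (4) with $|\nabla G_R|$ (rather than the sharper $G_{R,t}$) is designed to absorb the geometric curvature term with margin, while leaving flexibility for the non-radial lifts used later in the paper.

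I would start with the preliminary profile $\tilde G_R(t,y):=G(t-\phi(y),\,y)$, where $\phi(y):=\frac{C_0}{2(1+a)R}\,y^2$ is the unique solution of $\phi''+\frac{a}{y}\phi'=\frac{C_0}{R}$ with $\phi(0)=\phi'(0)=0$ and $C_0$ a constant to be fixed large relative to $2(n-1)$. Using $\triangle_a G=0$ and differentiating in $(t,y)$ one computes
\begin{equation*}
\triangle_a \tilde G_R \;=\; -G_t\!\left(\phi''+\tfrac{a}{y}\phi'\right) + (\phi')^2 G_{tt} - 2\phi' G_{ty} \;=\; -\tfrac{C_0}{R}\,G_t \;+\; \mathrm{err},
\end{equation*}
all $G$-derivatives evaluated at $(t-\phi(y),\,y)$. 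For $\delta$ chosen sufficiently close to $\tfrac12$, the error satisfies $|\mathrm{err}|\le C R^{-1-\gamma}$ throughout $\{|(t,y)|\le R^{1-\delta}\}$ by the asymptotics \eqref{NG}, while in the transition region the main term $-\frac{C_0}{R}G_t$ beats $\frac{2(n-1)}{R}|\nabla\tilde G_R|$ because $G_t$ and $|\nabla G|$ are comparable where $G$ is bounded away from $\pm 1$. Property (3) follows from $|\phi(y)|\le C/R$ on $\mathcal B_4^+$ together with the smoothness of $G$ there.

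To enforce property (1), I would splice $\tilde G_R$ with the constant $1$ past a level set $\{\tilde G_R=1-\eta_R\}$, choosing $\eta_R$ small enough that by the decay $1-g(s)\lesssim s^{-2s}$ the seam lies well inside $\mathcal B^+_{R^{1-\delta}}$; since the constant $1$ satisfies (4) trivially and the viscosity minimum of two functions satisfying (4) still satisfies (4), the inequality is preserved globally. Monotonicity in $t$ and the vanishing of $\partial_t G_R$ outside the half-ball are inherited from $\tilde G_R$. Finally, to upgrade the boundary relation from the equality $\partial_y^{1-a}\tilde G_R = W'(g(t))=W'(\tilde G_R(t,0))$ (which follows from $\phi(0)=\phi'(0)=0$) to strict inequality for $t\notin[-1,1]$, I would subtract a further correction $\frac{\mu}{R}\,y^{1-a}\chi(t)$ with $\chi$ smooth, nonnegative, vanishing on $[-1,1]$ and strictly positive outside $[-2,2]$; this contributes $-\frac{\mu(1-a)}{R}\chi(t)$ to $\partial_y^{1-a} G_R$ without modifying the trace $g(t)$, and is small enough ($O(1/R)$) not to disturb the bulk inequality.

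The main technical obstacle is the bookkeeping at the splicing seam: one has to ensure the interpolation to $1$ does not create spurious gradients that spoil the $|\nabla G_R|$-version of (4), and that the bulk error $\mathrm{err}$ is indeed dominated by the margin $(C_0-2(n-1))\,G_t/R$. The latter hinges on the sharp lower bound $G_t\gtrsim r^{-1-2s}$ paired with $|\nabla G|\le Cr^{-1}$ from \eqref{NG}, which forces $\delta$ to be at least about $\tfrac12$; a smaller $\delta$ would enlarge the region of validity but destroy the balance between the principal term $-\frac{C_0}{R}G_t$ and the quadratic error $(\phi')^2 G_{tt}$.
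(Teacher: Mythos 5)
Your construction replaces the paper's additive perturbation by a horizontal shear $\tilde G_R(t,y)=G(t-\phi(y),y)$, which is a genuinely different route, but it has two gaps that I do not see how to close.

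First, the absorption step fails away from the transition layer. You need
$$\frac{2(n-1)}{R}\,|\nabla \tilde G_R| \le \frac{C_0}{R}\,G_t \quad\text{(up to controlled errors)}$$
throughout the region where $\tilde G_R<1$, not just where $G$ is bounded away from $\pm 1$. But by \eqref{NG} one only has $G_t\gtrsim r^{-1-2s}$ while $|\nabla G|\sim r^{-1}$ is sharp along $y=O(1)$, $|t|$ large: there $G_t\sim |t|^{-1-2s}$ whereas $|G_y|\sim y^{-a}|t|^{-2s}$, so $|\nabla G|/G_t\sim |t|\to\infty$. Hence no fixed $C_0$ absorbs $\frac{2(n-1)}{R}|\nabla \tilde G_R|$ near $y=0$ once $|t|$ is of order $R^{1-\delta}$. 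This is precisely why the paper perturbs additively, using a homogeneous barrier $H$ of degree $\alpha\in(1,1-a)$ with $\triangle_a H\le -r^{\alpha-2}$: the inequality $r^{\alpha-2}\gtrsim r^{-1}$ for $\alpha>1$ is what dominates $|\nabla G|\lesssim r^{-1}$ for all $r\ge 1$, a feature that $G_t$ alone cannot provide. The Neumann patch $-\frac{\mu}{R}y^{1-a}\chi(t)$ introduces a further problem: $\triangle_a(y^{1-a}\chi(t))=y^{1-a}\chi''(t)$ grows like $y^{1-a}$ and is not $O(1/R)$ relative to $G_t$ for $y$ large, so it does disturb the bulk inequality.

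Second, properties 1) and 2) are not obtained. As $y\to\infty$ at fixed angle $\theta$, $G$ approaches the bounded angular $\triangle_a$-harmonic profile $v(\theta)$, which is near $0$ at $\theta=\pi/2$, not near $1$. Since $\phi(y)/y\to 0$ in the region $y\le R^{1-\delta}$, the shear $\tilde G_R=G(t-\phi(y),y)$ also stays close to $v(\theta)$ there; consequently the set $\{\tilde G_R>1-\eta_R\}$ never covers the complement of $\mathcal B^+_{R^{1-\delta}}\cup\big((-\infty,0]\times[0,R^{1-\delta}]\big)$, so clipping to $1$ past that level set does not yield property 1). Likewise $\partial_t\tilde G_R=G_t>0$ everywhere, so $\partial_t G_R=0$ outside the half-ball is not ``inherited'' from $\tilde G_R$; the paper obtains it only through the extra step of taking the infimum over all left translations of $H_R$, which your proposal omits. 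In the paper's construction the clipping is effective because $G+\frac{C_0}{R}(H+C_1)\to\infty$ as $r\to\infty$ (since $H\ge r^\alpha$), whereas $\tilde G_R$ stays strictly below $1$.
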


The inequalities in 4) are understood in the viscosity sense.

Notice that by \eqref{NG}, property 3) implies that $$G_R(t,y) \le G \left(t+ \frac{C'}{R},y \right) \quad \mbox{ in}  \quad \mathcal B_4^+.$$

We remark that property 3) and the inequality above hold in any ball $\mathcal B^+_K$, for a fixed large constant $K$, provided that we replace $C/R$, $C'/R$ by $C(K)/R$, $C'(K)/R$.
\begin{proof}
We begin with the following claim whose proof we provide at the end.

{\it Claim:} For each $\alpha \in (1,1-a)$ there exists $H$ a homogenous of degree $\alpha$ function such that  
$$H\ge r^\alpha, \quad \triangle _a H \le - r^ {\alpha -2}, \quad |\nabla H| \le C r^{\alpha-1},
 \quad \partial_y^{1-a} H \le C |t|^{\alpha - (1-a)}.$$ 
Here $r$ denotes the distance to the origin and $C=C(\alpha)$ depends on the universal constants and $\alpha$. 

\

Fix such an $\alpha$ and define 
\be\label{HR}
H_R:= \min \left \{ G+ \frac {C_0}{R}( H + C_1), \quad 1 \right \},
\ee
with $C_0$, $C_1$ large constants to be specified later. 

We define $G_R$ as the infimum over all left translations of $H_R$ i.e.
$$G_R(t,y) = \inf_{l \ge 0} H_R(t+l,y).$$

Since $|G|<1$ we have $H_R > -1$, and $H_R=1$ outside $\mathcal B^+_{R^{1-\delta}}$ 
provided that $\delta$ is chosen sufficiently small such that $(1-\delta) \alpha >1$. 
Properties 1) and 2) are clearly satisfied.

Notice that $H$ is increasing in a band $[C, \infty) \times [0,4]$ and we obtain that $H_R$ is increasing in $[-4,\infty) \times [0,4]$. This gives $G_R=H_R$ in $\mathcal B_4^+$ and property 3) is satisfied.

The properties of $H$ and \eqref{NG} imply that in the set $\{H_R<1\}$ we have
$$|\nabla H_R| \le C \min \{1,r^{-1}\} + C C_0 R^{-1} r^{\alpha -1},$$
and $$\triangle _a H_R \le - C_0 R^{-1} r^{\alpha-2}.$$
Then the first inequality in 4) holds for $H_R$ provided that $C_0$ is chosen sufficiently large, and therefore holds also for $G_R$ as the infimum over translations of $H_R$. 

 On $y=0$ in the set $\{H_R<1\}$ we have
$$\p_y^{1-a} H_R=\p_y^{1-a} G + C_0R^{-1} \p_y^{1-a} H \le W'(G) + C R^{-1} |t|^{\alpha -(1-a)}.$$

From the behavior of $g$ and $g'$ for large $t$, 
we see that the minimum of $H_R(t,0)$ occurs at some $t=q_R \sim - R^{1/(2s+\alpha)} \ll -1$ and
$$\|(H_R-G)(t,0)\|_{L^\infty([q_R, \infty)} \to 0 \quad \mbox{ as} \quad  R \to \infty.$$ 
Since $W'' \ge c$ outside $[g(-1), g(1)]$ we find that when $ t \in [q_R,\infty) \setminus[-1,1] $ and $\{H_R<1\}$ we have 
$$W'(H_R)-W'(G) \ge \frac c 2 (H_R-G) \ge c' R^{-1}(|t|^\alpha +  C_1),$$
thus, if $C_1$ is sufficiently large,
$$\p_y^{1-a} H_R < W'(H_R) \quad \quad  \mbox{in} \quad [q_R, \infty) \setminus [-1,1].$$
Now the second inequality of 4) is satisfied by $G_R$ as the infimum of left translations of $H_R$. 

\

{\it Proof of Claim:} We find $H$ as a perturbation of the function $ C y ^\alpha$ near $y=0$. 
Notice that $y^{1-a}$ is $\triangle_a$-harmonic, thus $y^\alpha$ is $\triangle_a$-superharmonic for $\alpha < 1-a$. 
However $C y^\alpha$ does not satisfy the first and the third property given in the claim. 
 
We write $H$ in polar coordinates as, $H=r^\alpha h(\theta)$ with $h$ an even function with respect to $\pi/2$ and then
\be\label{ah}
r^{2-\alpha}\triangle_a H=h'' + \alpha(\alpha +a) h + a \cot \theta \, \,  h' ,
\ee
\be \label{yh}
\p_y^{1-a} H= r^{\alpha-(1-a)}  \, \partial_\theta ^{1-a} h. 
  \ee
For all small $\sigma$, the function 
$$h_ \sigma=\sigma + \theta^{1-a} -  \theta^2,$$ 
gives a negative right hand side in \eqref{ah} when $\theta$ belongs to a small fixed interval $[0,c]$. 
We choose first $M$ large and then $\sigma$ small such that 
the graphs of $M h_\sigma$ and $(\sin \theta)^\alpha$ become tangent by above at some point in the interval $[0,c]$.
 Now we ``glue" parts of the two graphs in a single graph of a $C^{1,1}$ function $\tilde h$. 
Now it is easy to check that all properties hold by taking $h$ a large multiple of $\tilde h$.

\end{proof}

From the construction of $H_R$, $G_R$ we see that both of them decrease with $R$ as we increase $R$.

Next we construct a similar family $\Psi_R$ which can be compared with $G_{\bar R}$ even when $R$ and $\bar R$ have different orders of magnitude.
  
\begin{lem}\label{rs2}
There exist functions $G_R$ and $\Psi_R$ that satisfy the properties 1)-4) of Lemma \ref{rs1} for some $\delta$, $C$ universal 
such that
$$G_R(t + R^{-\sigma},y) \ge \Psi_{R^{1-\sigma}}(t,y),$$
with $\sigma \in (0, \delta/3)$ small universal.
\end{lem}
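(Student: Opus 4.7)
My plan is to construct $\Psi_R$ by the same recipe as $G_R$ in Lemma~\ref{rs1}, but using a homogeneous subsolution $\tilde H$ of strictly smaller degree $\tilde\alpha<\alpha$, with both $\tilde\alpha,\alpha\in(1,1-a)$ and $\alpha-\tilde\alpha$ a fixed universal constant. I produce $\tilde H$ from the Claim in the proof of Lemma~\ref{rs1}, then set
\[
\tilde H_R:=\min\bigl\{G+\tfrac{\tilde C_0}{R}(\tilde H+\tilde C_1),\ 1\bigr\},\qquad \Psi_R(t,y):=\inf_{l\ge 0}\tilde H_R(t+l,y),
\]
for appropriate universal constants $\tilde C_0,\tilde C_1$, and relabel so that both barriers share a common value of $\delta$. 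Properties 1)--4) for $\Psi_R$ then follow verbatim from the proof of Lemma~\ref{rs1}.

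Since $G_R$ and $\Psi_R$ are both infima over right-translations, the desired inequality $G_R(t+R^{-\sigma},y)\ge\Psi_{R^{1-\sigma}}(t,y)$ is implied by the pointwise shifted bound
\[
H_R(\tau,y)\ \ge\ \tilde H_{R^{1-\sigma}}(\tau-R^{-\sigma},y)\qquad\text{for all }(\tau,y),
\]
which is trivial wherever $H_R(\tau,y)=1$. On the complementary set, subtracting $G(\tau-R^{-\sigma},y)$ from both sides reduces the problem to
\[
G(\tau,y)-G(\tau-R^{-\sigma},y)+\tfrac{C_0}{R}\bigl(H(\tau,y)+C_1\bigr)\ \ge\ \tfrac{\tilde C_0}{R^{1-\sigma}}\bigl(\tilde H(\tau-R^{-\sigma},y)+\tilde C_1\bigr).
\]
Writing $r=|(\tau,y)|$, the mean value theorem together with $G_t\ge c\min\{1,r^{-1-2s}\}$ from \eqref{NG} bounds the left-hand side below by $cR^{-\sigma}\min\{1,r^{-1-2s}\}+\tfrac{C_0}{R}r^\alpha$, while the Claim bounds the right-hand side above by $CR^{-(1-\sigma)}(r^{\tilde\alpha}+1)$.

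The reduced inequality is then verified by splitting the support region $\{r\le R^{1-\delta}\}$ at the matching scale $r_0:=R^{\sigma/(\alpha-\tilde\alpha)}$: for $r\le r_0$ the shift term $cR^{-\sigma}r^{-1-2s}$ dominates the right-hand side, thanks to the exponent check $(1+2s+\tilde\alpha)\sigma/(\alpha-\tilde\alpha)\le 1-2\sigma$; for $r\ge r_0$ the extra $\tfrac{C_0}{R}r^\alpha$ beats $\tfrac{\tilde C_0}{R^{1-\sigma}}r^{\tilde\alpha}$, precisely because $\alpha-\tilde\alpha$ is a fixed positive quantity. The main obstacle is the joint choice of $\alpha,\tilde\alpha,\delta,\sigma$: the gap $\alpha-\tilde\alpha$ must be positive yet small enough that both exponents lie in the admissible range $(1/(1-\delta),\,1-a)$ used in Lemma~\ref{rs1}, and the matching scale $r_0$ must fall inside the supports of both barriers; carrying out this balance produces exactly the universal constraint $\sigma<\delta/3$ stated in the lemma.
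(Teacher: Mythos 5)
Your proposal is correct and follows essentially the same route as the paper: define $\Psi_R$ by the Lemma~\ref{rs1} recipe using a strictly smaller homogeneity exponent $\tilde\alpha=\beta<\alpha$, and reduce the claim to a pointwise comparison of $H_{R,\alpha}(\cdot+R^{-\sigma},\cdot)$ with $H_{R^{1-\sigma},\beta}$ on the set where the former is below $1$, using the monotone lower bound on $G_t$ from \eqref{NG} to extract the $R^{-\sigma}\min\{1,r^{-1-2s}\}$ gain from the shift. The one place you diverge is the final inequality check: you split the radial range at the matching scale $r_0=R^{\sigma/(\alpha-\tilde\alpha)}$ and verify each regime separately, whereas the paper interpolates the two left-hand terms via $a+b\ge a^\mu b^{1-\mu}$ to produce a single intermediate homogeneity $\gamma>\beta$ with a favorable $R$-exponent. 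The two computations are equivalent up to bookkeeping, and your exponent constraint $(1+2s+\tilde\alpha)\sigma/(\alpha-\tilde\alpha)\le 1-2\sigma$ matches what one gets from the paper's choice of $\mu$ and $\sigma$. One small correction of emphasis: the bound $\sigma<\delta/3$ in the statement is not forced by this balance (which only requires $\sigma$ small); it is a further restriction imposed for use in Proposition~\ref{p2}, and your proof should simply record that $\sigma$ may be taken to also satisfy it.
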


\begin{proof}
Denote by $G_{R, \alpha}$ the function constructed in Lemma \ref{rs1}. 

We choose $G_R:=G_{R,\alpha}$, $\Psi_R:= G_{R, \beta}$
 for some fixed $\alpha$, $\beta$ such that $1< \beta <\alpha < 1-a$. We take $\delta=\min \{\delta(\alpha), \delta(\beta) \}$, and $C= \max\{ C(\alpha), C(\beta) \}$ and then Lemma \ref{rs1} holds for both $G_R$ and $\Psi_R$ with the same constants $\delta$ and $C$.

We show that
$$H_{R, \alpha}(t+ R^{-\sigma},y) \ge H_{R^{1-\sigma}, \beta}(t,y),$$
with $H_{R,\alpha}$ defined as in \eqref{HR}, and the lemma follows by taking the infimum over the left translations.

In the inequality above it suffices to restrict to the set where $\{H_{R,\alpha} <1.\}$ We have
$$H_R \ge G + R^{-1}(c_1 r^\alpha + c_2),$$
for some constants $c_1$, $c_2 $ depending on $\alpha$. After a translation of $R^{-\sigma}$ we obtain (see \eqref{NG})
$$H_R(t+ R^{-\sigma},y) \ge G(t,y) +c R^{-\sigma} \min \{1, r^{-1-2s}\} + \frac 12 R^{-1}(c_1 r^ \alpha + c_2).$$
When $r\ge1$ we use the inequality $a+ b \ge a ^\mu b^{1-\mu}$ for $\mu>0$ small, and we find
\be\label{2d1}
H_R(t+ R^{-\sigma},y) \ge G(t,y) + c(\alpha) R^{-\eta}(r^{\gamma}+1),
\ee
with
$$\gamma=\alpha(1-\mu)-\mu(1+2s), \quad \eta= 1-\mu + \sigma \mu,$$ (and $\eta > \sigma$.) We choose $\mu$ small and then $\sigma$ such that
$\gamma > \beta$ and $\eta < 1-\sigma$. Then the right hand side of \eqref{2d1} is grater than
$$G + R^{\sigma -1} (C_1(\beta)r^ \beta + C_2(\beta)) \ge H_{R^{1-\sigma}, \beta},$$
for all large $R$, and the lemma is proved.

\end{proof}

\begin{rem}\label{r00}
Using the monotonicity of $\Psi_r$ with respect to $r$, we have
$$ G_R(s + R^{-\sigma},y) \ge \Psi_{r}(s,y), \quad \quad \forall \, r \ge R^{1-\sigma}.$$
\end{rem}

\section{Estimates for $\{u=0\}$}

In this section we derive properties of the level sets of solutions to
\begin{equation}{\label{2min1}}
\triangle_a U = 0, \quad \quad \p_y^{1-a} U=W'(U),
\end{equation}  
which are defined in large domains.

In the next lemma we find axial approximations to the 2D solution $G$.

\begin{lem}[Axial approximations]\label{l1}
Let $G_R: \R^2_+ \to (-1,1]$ be the function constructed in Lemma \ref{rs2}. Then its axial rotation in $\R^{n+1}$
$$\Phi_R (x,y):=G_R(|x|-R,y)$$
satisfies 

1) $\Phi_R=1$ outside $\mathcal B_{R+R^{1-\delta}}^+$, 

\

2) 
$$\triangle_a \Phi_R \le 0 \quad \mbox{ in} \quad \R^{n+1}_+,$$ and $$\p_y^{1-a} \Phi_R < W'(\Phi_R) \quad \mbox{when} \quad |x|-R \notin  [-1,1] .$$ 
\end{lem}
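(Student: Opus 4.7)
The plan is to reduce both statements to the two-dimensional properties of $G_R$ established in Lemma \ref{rs2} (and ultimately Lemma \ref{rs1}) via a direct computation of $\triangle_a$ on the axially symmetric function $\Phi_R(x,y) = G_R(|x|-R,y)$. For property 1), I would argue by cases on $(x,y) \notin \mathcal B^+_{R+R^{1-\delta}}$. If $|x|-R \ge R^{1-\delta}$, the point $(|x|-R,y)$ has large positive first coordinate and lies outside the excluded set of Lemma \ref{rs1}. Otherwise, using $|x|^2+y^2 > (R+R^{1-\delta})^2$ together with $|x| \le R+R^{1-\delta}$, one computes
$$(|x|-R)^2+y^2 = |x|^2+y^2 - 2R|x|+R^2 > R^{2(1-\delta)} + 2R\bigl(R+R^{1-\delta}-|x|\bigr) \ge R^{2(1-\delta)},$$
so $(|x|-R,y)$ is again outside $\mathcal B^+_{R^{1-\delta}}$; when additionally $|x| < R$ the same bound forces $y > R^{1-\delta}$, placing the point outside $(-\infty,0]\times[0,R^{1-\delta}]$. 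Property 1) of Lemma \ref{rs1} then gives $\Phi_R = 1$.

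For the interior inequality in property 2), the standard axial-rotation identity yields at smooth points with $|x|>0$
$$\triangle_a \Phi_R(x,y) = (\triangle_a G_R)(|x|-R,y) + \frac{n-1}{|x|}\,(G_R)_t(|x|-R,y).$$
By property 2) of Lemma \ref{rs1}, $(G_R)_t \ge 0$ everywhere and $(G_R)_t \equiv 0$ outside $\mathcal B^+_{R^{1-\delta}}$. In particular, when $|x| \le R/2$ we have $|x|-R \le -R/2$, so $(|x|-R,y) \notin \mathcal B^+_{R^{1-\delta}}$ and the radial term vanishes (this also disposes of any issue at $|x|=0$, where $\Phi_R$ depends only on $y$). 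When $|x| \ge R/2$, the elementary bound $(n-1)/|x| \le 2(n-1)/R$ together with property 4) of Lemma \ref{rs1} gives
$$\triangle_a \Phi_R \le -\frac{2(n-1)}{R}|\nabla G_R| + \frac{n-1}{|x|}(G_R)_t \le (G_R)_t\left(\frac{n-1}{|x|} - \frac{2(n-1)}{R}\right) \le 0.$$
The boundary inequality is immediate from property 4) of Lemma \ref{rs1} because $\p_y^{1-a}$ acts only in $y$ and the condition $|x|-R \notin [-1,1]$ is literally the condition $t \notin [-1,1]$ in the two-dimensional lemma, with $\Phi_R(x,0)=G_R(|x|-R,0)$.

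The main subtlety, and the step that requires care rather than new ideas, is the viscosity interpretation: since $G_R$ is defined as an infimum of translates of $H_R$, it is only a viscosity supersolution, so the inequalities above must be transferred via test functions. Any smooth $\varphi$ touching $\Phi_R$ from above at $(x_0,y_0)$ with $|x_0|>0$ restricts along the radial plane through $(x_0,y_0)$ to a test function for $G_R$ in two variables; the extra term $\frac{n-1}{|x|}(G_R)_t$ enters precisely as in the smooth computation and is absorbed by the ``mean curvature budget'' $\frac{2(n-1)}{R}|\nabla G_R|$ built into property 4) of Lemma \ref{rs1}. The axis $|x|=0$ is not genuinely problematic because $\Phi_R$ is trivially smooth there (it is a function of $y$ alone in a neighborhood of the axis, as noted above), so no extra argument is needed to close the viscosity proof.
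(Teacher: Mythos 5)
Your proof is correct and follows essentially the same approach as the paper: compute $\triangle_a$ of the axial rotation, split into the region where $\partial_s G_R$ vanishes versus where $|x| > R/2$, and absorb the extra mean-curvature term $\frac{n-1}{|x|}\partial_s G_R$ using property 4) of Lemma \ref{rs1}. The paper's own proof is terser (three lines, keying on $\partial_s G_R = 0$ for $|s|\ge R^{1-\delta}$ and $R+s>R/2$ otherwise), but it is the same argument; your explicit verification of property 1) and the viscosity remarks are correct elaborations rather than a different route.
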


Let $\phi_R(x)=\Phi_R(x,0)$ denote the trace of $\Phi_R$ on $\{y=0\}$. 
Notice that $\phi_R$ is radially increasing, and $\{\phi_R=0\}$ is a sphere which is in a $C/R$-neighborhood of the sphere of radius $R$.

\begin{proof}

We have
$$\triangle _a \Phi_R(x,y)= \triangle _a G_R(s,y)  + \frac{n-1}{R+s}\, \partial_s \, G_R(s,y), \quad \quad s=|x|-R,$$
$$\p_y^{1-a} \Phi_R(x,0)=\p_y^{1-a} G_R(s,0).$$
The conclusion follows from Lemma \ref{rs2} since $\partial_s G_R=0$ 
when $|s| \ge R^{1-\delta}$ and $R+s>R/2$ when $|s| < R^{1-\delta}$.

\end{proof}

\begin{defn}\label{d1}
 We denote by $\Phi_{R,z}$ the translation of $\Phi_R$ by $z$ i.e.
$$\Phi_{R,z}(x,y):= \Phi_R(x-z,y)=G_R(|x-z|-R,y).$$
Similarly we define $\Psi_{R,z}$ the axial rotation of the other 2D solution $\Psi_R$ given in Lemma \ref{rs2},
$$\Psi_{R,z}(x,y):=\Psi_R(|x-z|-R,y).$$
Clearly $\Psi_{R,0}$ satisfies properties 1), 2) of Lemma \ref{l1}. 

\end{defn}

{\it Sliding the graph of $\Phi_R$:}

Assume that $u$ is less than $\phi_{R,x_0}$ in $B_{2R}(x_0)$. 
By the maximum principle we obtain that $U< \Phi_{R,z}$ with $z=x_0$ 
in $\mathcal B_{2R}(x_0,0)$ (and therefore globally.)
We translate the function $\Phi_R$ above by moving continuously the center $z$, 
and let's assume that it touches $U$ by above, say for simplicity when $z=0$, 
i.e. the strict inequality becomes equality for some contact point $(x^*,y^*)$. 
From Lemma \ref{l1} we know that $\Phi_R$ is a strict supersolution away from $\{y=0\}$, 
and moreover the contact point must satisfy $y^*=0$, $|x^*| -R \in [-1,1]$, 
that is it belongs to the annular region $B_{R+1}\setminus B_{R-1}$ in the $n$-dimensional subspace $\{y=0\}$.

\begin{lem}[Estimates near a contact point] \label{l2}
Assume that the graph of $\Phi_R$ touches by above the graph of $U$ at a point $(x^*,0, u(x^*))$ with $x^* \in B_{R+1} \setminus B_{R-1}$. Let $\pi(x^*)$ be the projection of $x^*$ onto the sphere $\p B_R$. Then in $\mathcal B_1(\pi(x^*),0)$

1) $\{u=0\}$ is a smooth hypersurface in $\R^n$ with curvatures bounded by $\frac C R$ which stays in a $\frac C R$ neighborhood of $\p B_R$.  

2) $$|U-G(x \cdot \nu -R,y)| \le \frac CR, \quad \quad \nu:= \pi(x^*)/R.$$

\end{lem}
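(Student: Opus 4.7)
Both conclusions reduce to the single two-sided pointwise bound
\[ \bigl|U(x,y) - G(x\cdot\nu - R,\,y)\bigr| \le \frac{C}{R} \quad\text{in } \mathcal{B}_1(\pi(x^*),0), \]
which is exactly 2). From this, 1) will follow: the $C^0$ closeness together with the universal nondegeneracy $g'(0) > 0$ (from \eqref{NG}) gives, via the implicit function theorem, that $\{u=0\}$ is a smooth graph over the tangent plane to $\partial B_R$ at $\pi(x^*)$; bootstrapping through the interior regularity of the degenerate elliptic equation \eqref{eq} for $U$ upgrades the $C^0$ closeness to $C^2$ closeness, and since the level set of the translate $G(x\cdot\nu - R, 0)$ is a plane (zero curvature), this delivers the $C/R$ bound on the curvatures of $\{u=0\}$. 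Staying in a $C/R$-neighborhood of $\partial B_R$ is then immediate from the two-sided pointwise bound and the monotonicity of $g$.

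\textbf{Upper bound.} First I would invoke the contact condition, which yields $U \le \Phi_R = G_R(|x|-R,\,y)$ globally. Property 3) of Lemma \ref{rs1}, extended to a fixed larger ball by the remark following it, combined with \eqref{NG}, gives $G_R(s,y) \le G(s + C/R,\,y)$ on the relevant compact $(s,y)$-region. In $\mathcal{B}_1(\pi(x^*),0)$, a second-order Taylor expansion of $\partial B_R$ at $\pi(x^*)$ yields $|x| - R \le x\cdot\nu - R + C/R$. Combining the two inequalities and using that $G$ is nondecreasing in its first variable produces
\[ U(x,y) \le G\!\left(x\cdot\nu - R + \frac{C}{R},\,y\right). \]

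\textbf{Lower bound and main obstacle.} For the matching lower inequality I would build a symmetric family of axial \emph{subsolutions} by applying the whole Lemma \ref{rs1}/Lemma \ref{l1} construction to the potential $\tilde W(s):=W(-s)$ (which satisfies the same hypotheses as $W$) and then reflecting $u \mapsto -u$. This produces functions $\underline{\Phi}_{R,z}$ equal to $-1$ inside $B_R$ and $+1$ outside, with the same orientation as $U$ across the contact sphere, which can be slid from below against $U$. Because the upper contact already placed $u(x^*) = g_R(|x^*|-R)$ in the transition range of the 1D profile, and the annulus $B_{R+1}\setminus B_{R-1}$ has bounded width, the lower sliding must produce a contact point within $O(1)$ of $\pi(x^*)$. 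Repeating the upper-bound calculation with $\underline{\Phi}$ in place of $\Phi_R$ and reversing the inequality then gives $U(x,y) \ge G(x\cdot\nu - R - C/R,\,y)$. The hard part is precisely this step: executing the symmetric construction while preserving the sharp $O(1/R)$ constants of Lemma \ref{rs1}, verifying that the lower contact localizes near $\pi(x^*)$ rather than at a faraway direction on $\partial B_R$, and handling the nonlocal boundary condition $\partial_y^{1-a}U = W'(u)$ where the strict barrier inequalities hold only off the transition annulus $\{||x|-R|\le 1\}$.
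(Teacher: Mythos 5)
Your upper bound $U \le G(x\cdot\nu - R + C/R,\,y)$ matches the paper exactly, but the mechanism you propose for the lower bound is not what the paper does, and it has a genuine gap that you yourself flag. The paper never constructs a lower barrier. Instead it exploits that the upper envelope $V(x,y) := G(x_n - R + C'/R,\,y)$ (taking $\pi(x^*) = Re_n$) is itself an exact solution of \eqref{2min1}, so $V - U \ge 0$ solves the \emph{linearized} problem $\triangle_a(V-U)=0$, $\partial_y^{1-a}(V-U) = b(x)(V-U)$ with $b$ bounded. The contact condition forces $(V-U)(x^*,0) \le C''/R$ (because at the contact point $U$ equals $\Phi_R$, which by property 3) of Lemma \ref{rs1} is within $C/R$ of $G$). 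The Harnack inequality for this Neumann problem then propagates smallness at one point to $|V-U| \le C/R$ in $\mathcal B_{5/2}(Re_n)$, giving the two-sided bound in one step; Schauder estimates upgrade this to $C^{2,\alpha}_x$ closeness and the curvature bound in part 1) follows.

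The gap in your lower-barrier strategy is the localization of the lower contact. The hypotheses of Lemma \ref{l2} provide only an \emph{upper} contact at $x^*$; there is no initial lower comparison anywhere to anchor the sliding of $\underline{\Phi}_{r,z}$, and even if one begins with a tiny ball deep inside $B_R$, the first touching point of the rising lower barrier could occur on a far-away portion of $\partial B_R$ unrelated to $\pi(x^*)$. Your heuristic that ``the upper contact already placed $u(x^*)$ in the transition range, so the lower sliding must localize within $O(1)$'' does not follow: knowing $u(x^*)$ pointwise does not control where a global lower barrier first touches $U$. The paper's Harnack trick is precisely the tool that replaces this missing lower-barrier step, and it requires no symmetric construction for $\tilde W(s)=W(-s)$ at all.
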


\begin{proof}
Assume for simplicity that $x^*$ is on the positive $x_n$ axis and therefore $\pi(x^*)=Re_n$, $|x^*-Re_n| \le 1$. By Lemma \ref{l1} we have
$$U \le \Phi_R \le G \left(|x|-R + \frac C R, y \right) \le G \left (x_n-R + \frac{C'}{R},y \right)=:V \quad \quad \mbox{in} \quad \mathcal B_{3}(R e_n).$$
Both $U$ and $V$ solve the same equation \eqref{2min1}, and $$(V-U)(x^*,0) \le \frac {C''}{ R}.$$ Since $V-U \ge 0$ satisfies
$$\triangle_a (V-U)=0, \quad \quad \p_y^{1-a} (V-U)= b(x)(V-U), $$
$$ b(x):= \int_0^1 W''(tu(x) + (1-t)v(x)) dt,$$
we obtain $$|V-U| \le \frac C R \quad \quad \mbox{in} \quad \mathcal B_{5/2}(R e_n),$$
from the Harnack inequality with Neumann condition for $\triangle_a$. Moreover since $b$ has bounded Lipschitz norm and $s> 1/2$ we obtain that $U-V \in C_x^{2,\alpha}$ for some $\alpha>0$, and 
$$\|U-V\|_{C_x^{2,\alpha}(\mathcal B_2(Re_n))} \le \frac C R,$$
by local Schauder estimates. This easily implies the lemma.

\end{proof}

\begin{rem}\label{r1}
If instead of $\mathcal B_1((\pi(x^*),0))$ we write the conclusion in $\mathcal B_K((\pi(x^*),0))$ for some large, fixed constant $K$, then we need to replace $\frac C R$ by $\frac{C(K)}{R}$. Here $C(K)$ represents a constant which depends also on $K$.
\end{rem}

Next we obtain estimates near a point on $\{u=0\}$ which admits a one-sided tangent ball of large radius $R$.

\begin{lem}\label{l3}
Assume that $U$ is defined in $\mathcal B_{2R}^+$, satisfies \eqref{2min1}, and that

a) $B_R(-Re_n) \subset \{u<0\}$ is tangent to $\{u=0\}$ at $0$,

b) there is $x_0 \in B_{R/2}(-Re_n)$ such that $u(x_0) \le -1+c$ for some $c>0$ small. 

\noindent
Then 

1)  $\{u=0\}$ is smooth in $B_1$ and has curvatures bounded by $\frac C R$.

2) $|U-G(x_n,y)| \le \frac C R$ in $\mathcal B_1$.

\end{lem}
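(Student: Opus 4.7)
The plan is to reduce Lemma \ref{l3} to Lemma \ref{l2} via a sliding argument producing a touching-from-above of a supersolution barrier with $U$ at a contact point localized near the origin.

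First I would use hypothesis (b) together with a boundary Harnack inequality (of the type used at the end of the proof of Lemma \ref{l2}) applied to $1+U \ge 0$, which satisfies a linear equation $\triangle_a(1+U) = 0$ with Neumann boundary condition $\partial_y^{1-a}(1+U) = b(x)(1+U)$ for a bounded coefficient $b$ arising from $W''$. This propagates the pointwise bound $(1+u)(x_0) \le c$ to a uniform bound $u \le -1 + c''$ on a sub-ball $B_{\rho_0}(x_0) \subset B_{R/2}(-Re_n)$ of universal radius $\rho_0$, with $c'' = c''(c) > 0$ small.

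Next I would set up a sliding argument with the family of axial supersolutions $V_t := \Phi_{R, -t e_n}$ for $t \ge R$, whose trace on $\{y=0\}$ has zero set approximating the sphere $\partial B_R(-t e_n)$ tangent to $\{x_n = 0\}$ at the origin from below. For $t \ge 3R + R^{1-\delta}$, the function satisfies $V_t \equiv 1 \ge U$ on $\mathcal{B}_{2R}^+$. Decreasing $t$, consider the critical value $t^* := \inf\{t \ge R : V_t \ge U \text{ in } \mathcal{B}_{2R}^+\}$; at $t = t^*$ the first contact $(x^*, y^*)$ must satisfy $y^* = 0$ and $|x^* + t^* e_n| - R \in [-1, 1]$ by the strict supersolution properties in Lemma \ref{l1}. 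Hypothesis (a) excludes $t^* < R$ (else $V_{t^*}(0, 0) = G_R(t^* - R, 0) < 0 = u(0)$ contradicts $V_{t^*} \ge U$), while the Harnack bound from Step 1 excludes contact in the deep interior of $B_R(-R e_n)$, pinning the contact to $x^* \in B_{C/R}(0)$ and $t^* \in [R, R + C/R]$. I would then apply the translated version of Lemma \ref{l2} at $(x^*, 0)$: the projection $\pi(x^*)$ onto $\partial B_R(-t^* e_n)$ lies within $C/R$ of the origin and the outward unit normal there is $\nu = e_n + O(1/R)$, so the conclusion $|U - G((x - z^*) \cdot \nu - R, y)| \le C/R$ together with the Lipschitz continuity of $G$ converts into $|U - G(x_n, y)| \le C/R$ and the curvature bound $\le C/R$ in $\mathcal{B}_1$.

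The main obstacle lies in the sliding/localization step: one must verify the comparison $V_t \ge U$ on $\partial \mathcal{B}_{2R}^+$ throughout the slide (possibly after restricting to a smaller subdomain adapted to the barrier's transition layer) and pin the first contact to a neighborhood of the origin rather than elsewhere on the annular region $\{\,||x + t^* e_n| - R| \le 1\,\}$. A delicate quantitative issue is the mismatch between the barrier's deep-interior value $V_t \approx -1 + O(R^{-2s})$ and the Harnack bound $u \le -1 + c''$ (with $c''$ independent of $R$); closing this gap may require pairing the slide with the $\Psi_r$ family from Lemma \ref{rs2} via Remark \ref{r00}, or else a refinement of Step 1 yielding quantitative decay of $1+u$ throughout the bulk of $B_R(-R e_n)$ that outpaces the barrier.
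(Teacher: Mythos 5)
Your proposal has the right skeleton — establish an initial comparison with a radial barrier using hypothesis (b) plus a Harnack inequality, slide it until a first contact, and feed the contact point into Lemma~\ref{l2} — and this is indeed the paper's strategy. But the choice of barrier radius $R$ (equal to the radius of the interior tangent ball) is a genuine error that defeats the localization step, which you correctly flagged as the delicate point but did not resolve.

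Here is why radius $R$ fails. The barrier $V_t = \Phi_{R,-te_n}$ has zero level set approximately the sphere $\partial B_R(-te_n)$, which has the same curvature $1/R$ as $\partial B_R(-Re_n)$. At the critical slide parameter $t^* \approx R$ the two spheres nearly coincide. Lemma~\ref{l2} places $\{u=0\}$ within $C_1/R$ of $\partial B_R(-t^*e_n)$ near the contact projection $\pi(x^*)$, and hypothesis (a) says $\{u=0\}$ avoids the interior of $B_R(-Re_n)$; but since the two spheres stay uniformly within $O(1/R)$ of each other over their whole common region, these two constraints are compatible with $\pi(x^*)$ lying anywhere on the sphere, including near the equator at horizontal distance $\sim R$ from the origin. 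There is simply no mechanism that forces $\pi(x^*) \in B_{C_2}$. The Harnack bound at $x_0$ does not help: Lemma~\ref{l1} already forces the contact into the transition annulus $|\,|x^*+t^*e_n|-R\,|\le 1$ where $u(x^*)$ is order-one away from $-1$, and a bound on $u$ near $x_0$ deep in the interior says nothing about the annulus near the equator.

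The paper resolves this by choosing the barrier radius strictly smaller, namely $R/8$, and sliding $\Phi_{R/8,z}$ with $z$ moving up the $e_n$-axis from $-Re_n$. The crucial geometric point is that $\partial B_{R/8}(z^*)$ (curvature $8/R$) peels away quadratically from $\partial B_R(-Re_n)$ (curvature $1/R$): near the point of near-tangency the vertical gap at horizontal distance $C_2$ is $\sim \tfrac{7}{2R}C_2^2$. If $\pi(x^*)$ were at horizontal distance $C_2$ from the origin, it would lie a distance $\sim C_2^2/R$ inside $B_R(-Re_n)$, forcing $\{u=0\}$ (which is within $C_1/R$ of the small sphere there) to penetrate $B_R(-Re_n)$ — contradicting (a) once $C_2$ exceeds a universal constant. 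This pins $\pi(x^*) \in B_{C_2}$ and simultaneously pins $z^*$ to $\approx -\tfrac{R}{8}e_n + O(1/R)$; then Lemma~\ref{l2} with Remark~\ref{r1} gives the conclusion in $\mathcal B_1$. The initial ordering $U < \Phi_{R/8,-Re_n}$ is obtained as you suggest from (b) and Harnack, but implemented as a continuous deformation $\Phi_{r,z}$ from a small-scale barrier $\Phi_{R_0/2,x_0}$ (which dominates $U$ once $c$ is small) with $z$ moved to $-Re_n$ and $r$ grown to $R/8$; Lemma~\ref{l2} rules out touching along the way since all intermediate spheres sit deep inside $B_R(-Re_n)$ where $\{u=0\}$ cannot be. Replacing your radius $R$ by $R/8$ throughout (and sliding the center rather than the radius) repairs the proposal; the $\Psi_r$ family and Remark~\ref{r00} are not the fix here — they are needed later in Proposition~\ref{p1} where the comparison must be localized to a subregion.
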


\begin{proof}
Assume first that $u< \phi_{R/8,z}$  for $z =-Re_n$.
 
We translate the graph of $\Phi_{R/8,z}$ by moving $z$ continuously upward on the $x_n$ axis. We stop when the translating graph becomes 
tangent by above to the graph of $U$ for the first time. 
Denote by $(x^*,0,u(x^*))$ the contact point and by $z^*$ the final center $z$ and by $\pi(x^*)$ the projection of $x^*$ onto $\p 
B_{R/8}(z^*)$. 

By Lemma \ref{l2}, $\{u=0\}$ must be in a $\frac{C_1}{R}$ neighborhood of $\p B_{R/8}(z^*) \cap B_1(\pi( x^*))$ for some $C_0$ universal. This implies
$$z^* = t e_n \quad\mbox{ with} \quad  \quad t \in  \left[- \frac R  8 - \frac{C_1}{ R} ,- \frac R  8 + \frac{C_1}{R}\right].$$    
Moreover, $\pi (x^*) \in B_{C_2}$ since otherwise $\pi (x^*)$ is at a distance greater than $\frac{1}{R} \frac{C_2^2}{8} > \frac{C_1}{R}$ in the interior of the ball $B_{R}(-R e_n)$, hence $\{u=0\}$ must intersect this ball and we reach a contradiction. 

Now we apply Lemma \ref{l2} and Remark \ref{r1} at $\pi(x^*)$ and obtain 
the conclusion of the lemma.

It remains to show that $u < \phi_{R/8,-R e_n}$. By hypothesis b) and Harnack inequality we see that $u$ is still 
sufficiently close to $-1$ in a whole ball $B_{R_0}(x_0)$ for some large universal $R_0$, 
and therefore $u<\phi_{R_0/2,x_0}$ provided that $c$ is sufficiently small. 
Now we deform $\Phi_{R_0/2,x_0}$ by a continuous family of functions $\Phi_{r,z}$ and first 
we move $z$ continuously from $x_0$ to $-R e_n$ and then we increase the radius $r$ from $R_0$ to $R/8$.
By Lemma \ref{l2}, the graphs of these functions cannot touch the graph of $U$ by above and we obtain the desired inequality.
With this the lemma is proved. 
    
\end{proof}

In the next lemma we prove a localized version of Lemma \ref{l3}. 

\begin{prop}\label{p1}
Assume that $U$ satisfies the equation in $\mathcal B_{R^{1-\sigma}}$ with $\sigma$ small, universal as in Lemma \ref{rs2}, and 

a) $B_R(-Re_n) \cap B_{R^{\frac 12-\sigma}} \subset \{u<0\}$ is tangent to $\{u=0\}$ at $0$,

b) all balls of radius $\frac 14 R^{1-\sigma}$ which are tangent by below to $\p B_R(-Re_n)$ in $B_{R^{\frac 12-\sigma}}$ are included in $\{u<0\}$,

c) there is $x_0 \in B_{R^{1-\sigma}/4}(-\frac 12 R^{1-\sigma} e_n)$ such that $u(x_0) \le -1+c$. 

\noindent
Then in $B_1$ we have that $\{u=0\}$ is smooth and has curvatures bounded by $\frac C R$.

\end{prop}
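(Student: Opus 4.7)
The strategy parallels Lemma~\ref{l3}, with two adaptations to the localized setting. The global barrier $\Phi_{R/8,z}$ used there must be replaced by the shorter-range barrier $\Psi_{r,z}$ at scale $r\sim R^{1-\sigma}/8$, so that the sliding fits inside the available domain $\mathcal B^+_{R^{1-\sigma}}$; the curvature bound at the finer scale $R$ is then recovered by appealing to Remark~\ref{r00}, which relates $\Psi_r$ to a shifted copy of $G_{\widetilde R}$ for some $\widetilde R\sim R$.

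\emph{Starting configuration and sliding.} Hypothesis (c) and the Neumann Harnack inequality for $\triangle_a$ (as used in Lemma~\ref{l3}) give, for $c$ small universal, that $u<-1+c'$ on a ball $B_{R_0}(x_0)$ with $R_0$ universal and $c'$ small; hence $U<\Psi_{R_0/2,x_0}$ initially. I then deform $\Psi_{r,z}$ in two phases: first slide $z$ from $x_0$ to a point $z_0=-\mu e_n$ on the $x_n$-axis (with $\mu$ of order $R^{1-\sigma}$) while keeping $r=R_0/2$; then grow $r$ from $R_0/2$ to $r_1:=R^{1-\sigma}/8$ at $z=z_0$, with $\mu$ chosen so that the final ball $B_{r_1}(z_0)$ sits strictly inside $B_R(-Re_n)$. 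Throughout the deformation each intermediate ball $\overline{B_r(z)}$ can be arranged to lie either inside $B_{R_0}(x_0)$ (where $u\simeq-1$) or inside one of the hypothesis~(b) balls of radius $R^{1-\sigma}/4$, so that it is contained in $\{u<0\}$; this guarantees $U<\Psi_{r,z}$ throughout the deformation. From the resulting configuration $U<\Psi_{r_1,z_0}$ I then slide $z$ further upward along the $x_n$-axis, as in the main step of Lemma~\ref{l3}: when $z=-r_1 e_n$ the sphere $\partial B_{r_1}(z)$ is tangent to $0$ from inside $B_R(-Re_n)$ and $\psi_{r_1,z}(0)=0=u(0)$, so a first contact $(x^*,0)$ must appear; the contact is at $y=0$ because $\Psi$ is a strict $\triangle_a$-supersolution off $\{y=0\}$ (Lemma~\ref{l1} applied to $\Psi_{r_1,z}$), and $x^*$ lies at universal distance of the origin by hypothesis~(a).

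\emph{Upgrading the curvature bound.} By Remark~\ref{r00}, $\Psi_{r_1}(s,y)\le G_{\widetilde R}(s+\widetilde R^{-\sigma},y)$ for $\widetilde R\le r_1^{1/(1-\sigma)}$, and $\widetilde R$ is of order $R$. Axially this gives $\Psi_{r_1,z}\le\Phi_{\widetilde R,\widetilde z}$ after an appropriate adjustment of the center by $O(R^{-\sigma})$. Hence the graph of $U$ lies beneath a translate of $\Phi_{\widetilde R}$ as well, and a further short slide of this $\Phi_{\widetilde R,\cdot}$ produces a contact point within $O(1)$ of $x^*$. Applying Lemma~\ref{l2} together with Remark~\ref{r1} at this $\Phi_{\widetilde R}$-contact then yields the smoothness of $\{u=0\}$ in $B_1$ with curvature bound $C/\widetilde R\le C'/R$, as required.

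\emph{Main obstacle.} The technical heart of the argument is the sliding: arranging the two-phase deformation so that hypotheses~(b) and~(c) keep every intermediate ball inside $\{u<0\}$, and then converting the final contact, produced by $\Psi$ at scale $R^{1-\sigma}$, into a $\Phi_{\widetilde R}$-type contact at scale $R$ via Remark~\ref{r00}, so that Lemma~\ref{l2} can be invoked to give the sharper $C/R$ curvature estimate. The passage from the pointwise comparison between $\Psi_{r_1}$ and a shifted $G_{\widetilde R}$ to a geometric touching of $\Phi_{\widetilde R,\cdot}$ with the graph of $U$ is the new ingredient relative to the proof of Lemma~\ref{l3}.
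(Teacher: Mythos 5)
There is a genuine gap at the step where you try to convert the $\Psi_{r_1}$--contact at scale $r_1\sim R^{1-\sigma}$ into a $\Phi_{\widetilde R}$--contact at scale $\widetilde R\sim R$. The claimed inequality $\Psi_{r_1,z}\le\Phi_{\widetilde R,\widetilde z}$ is false: $\Psi_{r_1,z}\equiv 1$ outside $\mathcal B^+_{r_1+r_1^{1-\delta}}(z,0)$, a ball of radius $\sim R^{1-\sigma}$, whereas $\Phi_{\widetilde R,\widetilde z}<1$ along the entire sphere $\partial B_{\widetilde R}(\widetilde z)$ of radius $\sim R$, which necessarily protrudes from that ball; on that portion $\Psi_{r_1,z}=1>\Phi_{\widetilde R,\widetilde z}$. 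Even locally, Remark~\ref{r00} gives $\Psi_{r_1}(s,y)\le G_{\widetilde R}(s+\widetilde R^{-\sigma},y)$ as a comparison of one--dimensional profiles, but after axial rotation you are evaluating $\Psi_{r_1}$ at the signed distance to the small sphere $\partial B_{r_1}(z)$ and $G_{\widetilde R}$ at the signed distance to the larger sphere $\partial B_{\widetilde R}(\widetilde z)$; for internally tangent spheres of very different radii the former distance exceeds the latter by $\frac{|x'|^2}{2}\bigl(\frac{1}{r_1}-\frac{1}{\widetilde R}\bigr)\sim |x'|^2 R^{\sigma-1}$, so the two monotone profiles are being evaluated in the wrong order and the desired ordering holds only for $|x'|\lesssim R^{1/2-\sigma}$. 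Consequently the ``further short slide of $\Phi_{\widetilde R,\cdot}$'' cannot be performed: you have no global upper barrier for $U$ at scale $R$, and without one the slide could produce a first contact on the boundary of the available domain rather than near $x^*$. Sliding the single $\Psi_{r_1,z}$ alone only yields a tangent sphere of radius $r_1$ above $\{u=0\}$, hence a curvature bound $C/r_1\sim CR^{\sigma-1}$, which is far weaker than $C/R$.

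The paper resolves this differently. It slides the \emph{large} barrier $\Phi_{R/8,z}$ from the start but confines the contact to a box $\mathcal C_R$ of size $\sim R^{1-\sigma}$, so the relevant portion of $\Phi_{R/8,z}$ fits in the domain. The role of $\Psi_r$ (with $r=\tfrac14 R^{1-\sigma}$) is only to rule out a contact on $\partial\mathcal C_R$, and crucially it is slid over a whole family of centers $z$ with $|z'|\le R^{1/2-\sigma}$, $z_n=-2r$. Each such slide stops when $B_r(z)$ nearly touches $\partial B_R(-Re_n)$ (using hypothesis~(b)), and combining all of them yields the pointwise estimate $U(x)<\Psi_r(d_1(x)+Cr^{-1},y)$ where $d_1$ is the signed distance to the \emph{large} sphere $\partial B_R(-Re_n)$. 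It is this composite estimate (not a single $\Psi_{r_1,z}$) that, via Remark~\ref{r00} and a comparison of the two distance functions $d_1$, $d_2$ on $\mathcal C_R\setminus\mathcal C_{R/2}$, gives $U<\Phi_{R/8,z_0}$ on the boundary and hence forces the first contact of the $R/8$--scale barrier to be interior. The missing idea in your proposal is exactly this: use $\Psi_r$ with a sweep over many centers to build a boundary barrier depending on $d_1$, rather than trying to upgrade a single $\Psi_{r_1}$--contact to a $\Phi_{\widetilde R}$--contact.
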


\begin{proof}

As in Lemma \ref{l3}, we slide the graph of $\Phi_{R/8,z}$ in the $e_n$ direction till it touches the graph of $U$, except that now we restrict only to the region 
\be\label{CR}
\mathcal C_R:=\left \{ |x'| \le \frac 12 R^{\frac 12-\sigma}, \quad |x_n| \le \frac 12 R^{1-\sigma}, \quad |y| \le \frac 12 R^{1-\sigma} \right\}.
\ee
In order to repeat the argument above we need to show that the first contact point is an interior point and it occurs in $\mathcal C_{R/2}$. For this it suffices to prove that
\be\label{uphi}
U <  \Phi_{R/8,z_0} \quad \quad \mbox{in} \quad \mathcal C_R \setminus \mathcal C_{ R /2}, \quad \quad z_0:=\left(-\frac R8  + \frac{C_1}{R} \right)e_n.
\ee

We estimate $U$ by using the functions $\Psi_{R,z}$ given in Definition \ref{d1}.
 Notice that Lemma \ref{l2} holds if we replace $\Phi_R$ by $\Psi_R$.

Now we slide the graphs $\Psi_{r, z}$ with $r:=\frac 14 R^{1-\sigma}$ and $|z'|\le R^{\frac 12 -\sigma}$, $z_n=-2r$ upward in the $e_n$ direction. 
We use hypotheses b), c) and as in the proof of  Lemma \ref{l3} we find
$\Psi_{r, z} > U$ as long as $B_r(z)$ is at distance greater than $Cr^{-1}$ from $\p B_{R}(-R e_n)$.
We obtain that 
\be\label{urho}
U(x) <  \Psi_r (d_1(x) + C r^{-1},y), 
\ee
 where $d_1(x)$ is the signed distance to $\p B_R(-Re_n)$. From Remark \ref{r00} we have
$$ \Psi_r(s,y) \le  G_{R/8}(s+(R/8)^{-3\sigma},y).$$

We obtain
\be\label{ug}
U(x,y) < G_{R/8}(d_1(x) + 2 R^{-3\sigma},y).
\ee
Let $d_2(x)$ represent the distance to $\p B_{R/8}(z_0)$. Then in the region $\mathcal C_R \setminus \mathcal C_{R/2}$ we have either 

a) $|x'| \ge \frac 12  (R/2)^{\frac 12-\sigma}$ and then
\be\label{d12}
d_2(x)-d_1(x) \ge -\frac{C_1}{R} + \frac {1}{R}|x'|^2 \ge 2 R^{-3\sigma},   
\ee
or 

b) $\min \{|x_n|, y\} \ge R^{1-\sigma}/8$ and then both $(d_2(x),y)$ and $(d_1(x)+ 2R^{-\sigma},y)$ are outside $B^+_{1-\delta} \subset \R^2$, thus $G_{R/8}$ at these two points has the same value.

From \eqref{ug} we find
\be\label{ug1}
U(x,y) < G_{R/8}(d_2(x),y) \quad \mbox{in} \quad \mathcal C_R \setminus \mathcal C_{R/2},
\ee
and \eqref{uphi} is proved.

\end{proof}

Next we consider the case in which the $0$ level set of $u$ is tangent by above at the origin to the graph of a quadratic polynomial. 

\begin{prop}\label{p2}
Let $U$ satisfies the equation in $\mathcal B_{R^{1-\sigma}}$ and the hypothesis c) of Proposition \ref{p1}. Assume the surface 
$$\Gamma:=\left \{x_n= \sum_1^{n-1} \frac{a_i}{2} x_i^2 + b' \cdot x'  \right \} \cap B_{R^{\frac 12-\sigma}} \quad \mbox{ with} \quad |b'| \le \eps, \quad |a_i| \le \eps^{-2} R^{-1}, $$
is tangent to $\{u=0\}$ at $0$ for some small $\eps$ that satisfies
$\eps \ge R^{-\sigma/2}$, and assume further that all balls of radius $\frac 12 R^{1-\sigma}$ which are tangent to $\Gamma$ by below are included in $\{u<0\}$. Then
$$\sum_1^{n-1} a_i \le CR^{-1}.$$
\end{prop}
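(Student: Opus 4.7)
The plan is to adapt the sliding argument of Proposition \ref{p1}, with the quadratic surface $\Gamma$ playing the role of the tangent sphere $\partial B_R(-Re_n)$. Specifically, I would slide the axial barrier $\Phi_{R/8,z}$ in the $e_n$-direction until first contact with $U$, apply Lemma \ref{l2} to bound the curvatures of $\{u=0\}$ near the contact point, and then read off the bound on $\sum a_i$ from the tangency with $\Gamma$ at the origin.

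First, using hypothesis (c) and the Harnack inequality, as in Lemma \ref{l3}, I obtain $u<\phi_{R_0/2,x_0}$ in a universal ball, and deform continuously through the family $\Phi_{r,z}$ to arrive at a configuration in which $\Phi_{R/8,z_0}\ge U$ on the cylinder $\mathcal{C}_R$ of \eqref{CR}. Here $z_0$ is placed so that the barrier sphere $\partial B_{R/8}(z_0)$ lies just below $\Gamma$ near the origin; since $|b'|\le\varepsilon$, a small rotation of axes reduces to the case $b'=0$.

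Next, slide $\Phi_{R/8,z}$ upward in the $e_n$-direction. The crux is to show that the first contact lies in $\mathcal{C}_{R/2}$, following the three substeps of Proposition \ref{p1}: (a) slide the $\Psi_{r,z}$-barriers with $r=\frac12 R^{1-\sigma}$, using the hypothesis that all such tangent balls below $\Gamma$ lie in $\{u<0\}$, to get $U(x,y)<\Psi_r(d_\Gamma(x)+Cr^{-1},y)$, with $d_\Gamma$ the signed distance to $\Gamma$ (negative below); (b) invoke Remark \ref{r00} to upgrade this to $U(x,y)<G_{R/8}(d_\Gamma(x)+CR^{-\tau},y)$ for a suitable small $\tau>0$; and (c) compare $d_\Gamma$ with the distance $d_2$ from $\partial B_{R/8}(z_0)$ on $\mathcal{C}_R\setminus\mathcal{C}_{R/2}$, splitting into the subregion where $|x'|\gtrsim R^{1/2-\sigma}$ (sphere/paraboloid gap dominates) and the subregion where $\min\{|x_n|,y\}$ is large ($G_{R/8}$ saturates at the value $1\ge U$). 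At the first contact point $(x^*,0)\in\mathcal{C}_{R/2}$, Lemma \ref{l2} together with Remark \ref{r1} then gives that $\{u=0\}$ is smooth in a universal neighborhood of $x^*$ with curvatures bounded by $C/R$, and the positioning of $z_0$ forces $|x^*|$ to be universally bounded, so the origin sits inside this smooth piece.

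To conclude, write $\{u=0\}$ as a graph $x_n=f(x')$ near $0$; the previous step gives $|D^2 f(0)|\le C/R$. The tangency of $\{u=0\}$ with $\Gamma$ from above at $0$ gives $f(x')\ge\sum\frac{a_i}{2}x_i^2+b'\cdot x'$ with equality at $0$, so $D^2 f(0)\ge\operatorname{diag}(a_1,\dots,a_{n-1})$ as symmetric matrices; taking traces yields $\sum_{i=1}^{n-1} a_i\le(n-1)C/R$. The main obstacle is substep (c)—the distance comparison on $\mathcal{C}_R\setminus\mathcal{C}_{R/2}$. In Proposition \ref{p1} both relevant surfaces are spheres and the computation is essentially elementary; here $\Gamma$ is quadratic with principal curvatures possibly as large as $\varepsilon^{-2}R^{-1}$, considerably larger than the sphere's curvature $8/R$. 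The condition $\varepsilon\ge R^{-\sigma/2}$ in the hypothesis is precisely what confines $\Gamma$'s deviation from its tangent plane over $B_{R^{1/2-\sigma}}$ within the slack $R^{-\tau}$ afforded by Remark \ref{r00}, making the comparison (and hence the whole argument) go through.
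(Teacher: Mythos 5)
Your plan to recycle the sphere barrier $\Phi_{R/8,z}$ from Lemma \ref{l3} and Proposition \ref{p1} does not survive here, and the gap occurs precisely at substep (c), which you flag as the main obstacle but do not resolve. The paper does something structurally different: it replaces the sphere $\p B_{R/8}(z_0)$ by the \emph{quadratic} surface
$$\Gamma_2=\left\{x_n=\sum\tfrac{a_i}{2}x_i^2+b'\cdot x'+\tfrac{C_1}{R}-\tfrac1R|x'|^2\right\},$$
whose second-order coefficients match those of $\Gamma$ up to a $-\tfrac 2R$ shift. Then $d_2-d_1$ (signed distances to $\Gamma_2$ and $\Gamma$) equals $\tfrac1R|x'|^2-\tfrac{C_1}{R}$ \emph{identically}, independent of the $a_i$, and \eqref{d12} goes through verbatim. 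With a sphere in place of $\Gamma_2$, the gap instead behaves like $\sum\tfrac{a_i+8/R}{2}x_i^2+b'\cdot x'-\tfrac{C_1}{R}$, and when some $a_i$ is as negative as $-\eps^{-2}R^{-1}$ (allowed by the hypotheses), this gap is of order $-\eps^{-2}R^{-2\sigma}\sim -R^{-\sigma}$ at $|x'|\sim R^{1/2-\sigma}$ — the wrong sign, and of larger magnitude than the slack $R^{-3\sigma}$ from Remark \ref{r00}. So $U<G_{R/8}(d_2,y)$ fails in $\mathcal C_R\setminus\mathcal C_{R/2}$, and the contact point cannot be trapped. Your final remark that the assumption $\eps\ge R^{-\sigma/2}$ ``confines $\Gamma$'s deviation within the slack'' is incorrect: $\Gamma$'s deviation over $B_{R^{1/2-\sigma}}$ is of order $R^{-\sigma}\gg R^{-3\sigma}$; that assumption is actually used in the paper to make the expansion $H=\sum\kappa_i/(1-d_2\kappa_i)$ in \eqref{H} controllable ($d_2\kappa_i=o(1)$, $d_2\kappa_i^2=o(R^{-1})$).

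The same mismatch breaks your conclusion step. You claim ``the positioning of $z_0$ forces $|x^*|$ to be universally bounded'' and then read off $|D^2f(0)|\le C/R$ from Lemma \ref{l2}. In Lemma \ref{l3} that localization works because both the tangent surface and the barrier are spheres, so the barrier drops strictly inside $B_R(-Re_n)$ for $|x'|$ past a universal constant. Here, if some $a_i<-8/R$, the sphere barrier stays \emph{above} $\Gamma$ with a widening gap as $|x'|$ grows, so there is no a priori bound on $|x^*|$ and no curvature control at the origin. Worse, even if the localization held, your argument would yield the stronger statement $|a_i|\le C/R$ for each $i$, which cannot be correct: the hypothesis explicitly admits $|a_i|$ up to $\eps^{-2}R^{-1}\gg C/R$, and only the \emph{mean} curvature $\sum a_i$ is claimed to be $O(1/R)$. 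The paper proves exactly and only this by a contradiction argument: assume $\sum a_i\ge MR^{-1}$, show via \eqref{H} that $G_{R/8}(d_2,\cdot)$ is a strict supersolution with $\triangle_a\le -cMR^{-1}$, and then apply the maximum principle to $Q=(V-U)/(cMR^{-1})$ at the contact point. No curvature bound on $\{u=0\}$ at the origin is ever asserted or needed.
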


Proposition \ref{p2} states that the blow-down of $\{u=0\}$ satisfies the minimal surface equation in some viscosity sense.
Indeed, if we take $\eps=R^{-\sigma/2 }$, then the set $R^{\sigma-1}\{u=0\}$ cannot be touched at $0$ in a $R^{-1/2}$ neighborhood of the origin by a surface with curvatures bounded by $1/2$ and mean curvature greater than $CR^{-\sigma}$.

\begin{proof}

We argue as in the proof of Proposition \ref{p1} except that now we replace $\p B_R(-R e_n)$ by $\Gamma$ and $ \p B_{R/8}(z_0)$ by
$$\Gamma_2:=\left \{x_n= \sum_1^{n-1} \frac{a_i}{2} \, x_i^2  + b' \cdot x' +\frac{C_1}{R}- \frac 1 R |x'|^2\right \}.$$
We claim that
\be\label{104}
U(x,y) < G_{R/8}(d_2(x),y) \quad \mbox{in} \quad \mathcal C_{R}\setminus \mathcal C_{R/2},
\ee
where $d_2$ represents the signed distance to the $\Gamma_2$ surface and $\mathcal C_R$ is defined in \eqref{CR}. 
Using the surfaces $\Psi_{r,z}$ as comparison functions we obtain as in \eqref{urho}, \eqref{ug} above that 
$$U(x,y)<G_{R/8}(d_1(x)+C'r^{-1},y) \quad \mbox{in $\mathcal C_R$},$$
with $d_1(x)$ representing the signed distance to $\Gamma$. Notice that \eqref{d12} is valid in our setting. Now we argue as in \eqref{ug1} and obtain the desired claim  \eqref{104}.

Next we show that $G_{R/8}(d_2(x),y)$ is a supersolution away from the set $\{|d_2| \le 1, y=0\}$ provided that
$$\sum_1^{n-1} a_i \ge M R^{-1},$$
for some $M$ large, universal to be made precise later. 
The boundary inequality on $\{y=0\}$ is clearly satisfied  and on $\{y>0\}$ we have
\be\label{ta}
\triangle_a G_{R/8}(d_2(x),y) = \triangle _a G_{R/8} (s,y) + H(x) \partial_s G_{R/8}(s,y) , \quad \quad s:=d_2(x),
\ee
where $H(x)$ represents the mean curvature at $x$ of the parallel surface to $\Gamma_2$, and $\triangle_a$ on the right hand side is with respect to the variables $(s,y)$. 
If $|s|> R^{1-\delta}$ then $\p_s G_{R/8}=0$, and if $|s|\le R^{1-\delta}$ we show below that $H<0$, and in both cases we obtain $\triangle_a G_{R/8} \le 0$.

Let $\kappa_i$, $i=1,..,n-1,$ be the principal curvatures of $\Gamma_2$ at the projection of $x$ onto $\Gamma_2$. Notice that at this point the slope of the tangent plane to $\Gamma_2$ is less than $4 \eps$ hence we have
$$|\kappa_i| \le 2 \eps^{-2}R^{-1} \le 2R^{\sigma-1}, \quad \sum \kappa_i \le - \sum a_i + C \eps^2 \max |a_i| \le - \frac 12 M R^{-1}. $$ When $ |d_2| \le  R^{1-\delta},$ we obtain $d_2 \kappa_i =o(1)$, $d_2 \kappa_i^2=o(R^{-1})$ (since $\sigma < \delta/3$), hence
\be\label{H}
H(x)=\sum \frac{\kappa_i}{1-d_2 \kappa_i} =\sum (\kappa_i +  \frac{d_2\kappa_i^2}{1-d_2 \kappa_i}) \le - \frac 1 4 M R^{-1}.
\ee

Now we translate the graph of $G_{R/8}(d_2,y)$ along the $e_n$ direction till it touches the graph of $U$ by above. 
Precisely, we consider the graphs of $G_R(d_2(x-te_n),y) $ with $t \le 0$ 
and start with $t$ negative so that the function is identically 1 in $\mathcal C_R$. 
Then we increase $t$ continuously till this graph becomes tangent by above to the graph of $U$ in $\mathcal C_R$. 
Since $u(0)=0$, a contact point must occur for some $t \le 0$ and, by \eqref{104}, 
this point is interior to $\mathcal C_{R/2}$ and lies on $y=0$. 
Let $(x^*,0,u(x^*))$ be the first contact point where a translate $G_{R/8}(d_2(x-t^*e_n),y)$ touches $U$ by above. We show that we reach a contradiction if $M$ is chosen sufficiently large.

Define $V$ as $$ V(x,y):=G (d_2(x-t^*e_n) + C/R,y) \ge G_{R/8}(d_2(x-t^*e_n),y) \ge U(x,y).$$ Notice that
$$  \p_y^{1-a} V = W'(V),  \quad \quad (V-U)(x^*,0) \le C/R. $$
In $\mathcal B_1(x^*)$ we use the computation \eqref{ta} above for $V$ together with \eqref{H} and obtain 
$$\triangle _a V \le - c M R^{-1} , \quad \quad \mbox{in} \quad\mathcal B_1(x^*) .$$ 
The function $Q:=(V-U)/(cMR^{-1}) \ge 0$ satisfies in $\mathcal B_1(x^*) $
$$\triangle_a Q \le -1, \quad |\p_y^{1-a} Q| \le C Q, \quad Q(x^*,0) \le C' M^{-1}.$$
By the maximum principle 
$$ Q(x,y) \ge \mu^2 + \mu \, y^{1-a} - \frac{1}{2(n+1)}(|x-x^*|^2+y^2),$$
for some $\mu$ small universal, and we reach a contradiction at $(x^*,0)$ if $M$ is sufficiently large.

\end{proof}

\section{Harnack inequality}

In this section we use Proposition \ref{p1} and prove a Harnack inequality property for flat level sets, see Theorem
\ref{TH} below. The key step in the proof is to control the $x_n$ coordinate of the level set $\{u=0\}$ in a set of large measure in the $x'$-variables.

{\it Notation:} We denote by $\mathcal C(l, \theta)$ the cylinder
$$\mathcal C(l,\theta):= \{|x'| \le l, \quad |x_n| \le \theta\}.$$

\begin{thm}[Harnack inequality for minimizers]{\label{TH}}

Let $U$ be a minimizer of $J$ in $\mathcal B_{q}$ and assume that
$$0 \in \{u=0\} \cap \mathcal C(l,l) \subset \mathcal C(l,\theta),$$
and that all balls of radius $q:=(l^2 \theta^{-1})^{1- \frac \sigma 2} $ which are tangent to 
$\mathcal C(l,\theta)$ by below and above are included in $\{u<0\}$ respectively $\{u>0\}$.

Given $\theta_0>0$ there exist $\omega>0$ small depending
on $n$, $W$, and $\varepsilon_0(\theta_0) >0$ depending on
$n$, $W$ and $\theta_0$, such that if
$$\theta l^{-1} \le \varepsilon_0(\theta_0), \quad \theta_0 \le \theta,$$
then
$$ \{u=0\} \cap \mathcal C(\bar l, \bar l) \subset \mathcal C( \bar l, \bar \theta), \quad \quad \bar l:= l/4, \quad \bar \theta:=(1-\omega) \theta,$$
and all balls of radius $\bar q:=( {\bar l}^2 {\bar \theta}^{-1})^{1- \frac \sigma 2} $ which are tangent to $\mathcal C(\bar l, \bar \theta)$ by below or above do not intersect $\{u=0\}$.

\end{thm}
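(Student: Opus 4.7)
The strategy is a De Giorgi--style oscillation improvement for $\{u=0\}$, implemented with the spherical barriers $\Phi_R,\Psi_R$ of Section~3 and the pointwise curvature estimate of Proposition~\ref{p1}. Set $R := l^2/\theta$, the natural radius of curvature at the flatness scale $\eps=\theta/l$; by hypothesis $\eps \le \eps_0(\theta_0)$ and $\theta\ge\theta_0$, so $R$ is large and $q = R^{1-\sigma/2}$ matches exactly the scale built into Proposition~\ref{p1}.

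First I would locate two anchor points: from minimality of $U$, the density estimates of \cite{SV1},\cite{SV2} (together with $0\in\{u=0\}$ and $\theta\ge\theta_0$) produce universal-size balls $B_{r_0}(x_0^{\pm})\subset \mathcal C(l/2,\theta)$ with $u(x_0^+)\ge 1-c$ and $u(x_0^-)\le -1+c$. These will supply hypothesis (c) of Proposition~\ref{p1} during the sliding.

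For the upper improvement $\{u=0\}\cap \mathcal C(\bar l,\bar l)\subset\{x_n\le (1-\omega)\theta\}$, I slide a supersolution $\Phi_{R,z}$ downward: start with $z$ so high that $\Phi_{R,z}>U$ globally (this is arranged by inflating a tangent $q$-ball in $\{u>0\}$ above $\mathcal C(l,\theta)$ up to the $\Phi_R$ profile through a continuous family $\Phi_{R',z}$, as in the proof of Lemma~\ref{l3}), and translate $z$ continuously downward along $e_n$. Since $\Phi_R$ is a strict supersolution of (\ref{2min1}) away from the annular region $\{||x-z|-R|\le 1,\,y=0\}$ (Lemma~\ref{l1}), the first contact $(x^\ast,0)$ must lie there, and Proposition~\ref{p1} applies at $x^\ast$: (a) by tangency, (b) because the required $\tfrac14 R^{1-\sigma}$-balls are contained in tangent $q$-balls for $\mathcal C(l,\theta)$ and hence in $\{u<0\}$, and (c) by the anchor $x_0^-$ (one verifies $x_0^-$ falls in the required neighborhood using the spherical geometry at scale $R$). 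One concludes that $\{u=0\}$ lies below $\partial B_R(z^\ast)$; since this sphere has radius $R=l^2/\theta$, its cap over $|x'|\le \bar l = l/4$ rises from its lowest point by at most $\bar l^2/(2R) = \theta/32$. The crucial quantitative claim is $x_n^\ast \le (1-\omega_0)\theta$ for some universal $\omega_0>1/16$, whence the desired upper bound follows with $\omega = \omega_0 - 1/32$. This claim is forced by the anchor $x_0^-$: if the first contact occurred near $x_n^\ast\approx\theta$, the sphere $\partial B_R(z^\ast)$ of radius $R\gg|x_0^-|$ would sweep a cap containing $x_0^-$ on its $\{u>0\}$ side, contradicting $u(x_0^-)\le -1+c$ via the comparison estimate of Lemma~\ref{l2}. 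A symmetric sliding of $\Psi_{R,z}$ from below (anchored by $x_0^+$) yields $\{u=0\}\cap\mathcal C(\bar l,\bar l)\subset\{x_n\ge -(1-\omega)\theta\}$, and the two combine to give the claimed inclusion in $\mathcal C(\bar l,\bar\theta)$.

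The new tangent-ball condition at the reduced scale $\bar q = (\bar l^2/\bar\theta)^{1-\sigma/2}$ (which differs from $q$ by a universal factor, since $\omega$ is universal) is extracted by a further iteration of the same sliding: once $\{u=0\}$ is trapped strictly inside $\mathcal C(\bar l,\bar\theta)$, balls of radius $\bar q$ tangent to the smaller cylinder can be propagated from the hypothesized $q$-balls by a continuous deformation through $\Phi_{R',z}$ with $R'$ interpolating between $R$ and $\bar q$, applying Proposition~\ref{p1} at each intermediate scale to rule out contact with $\{u=0\}$. The principal obstacle is the quantitative step $x_n^\ast \le (1-\omega_0)\theta$ with $\omega_0$ \emph{universal}: this is where the non-degeneracy $\theta\ge\theta_0$ is essential, since it guarantees the anchors $x_0^\pm$ sit at a definite transverse distance from the cylinder walls and genuinely obstruct the first-contact point from reaching the top or bottom.
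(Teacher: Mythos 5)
Your plan diverges fundamentally from the paper's proof, and the divergence creates a genuine gap at exactly the step you flag as ``the principal obstacle.'' The sliding of $\Phi_{R,z}$ (or $\Psi_{R,z}$) establishes only that $\{u=0\}$ is locally a $C^2$ graph with curvatures $\le C/R$ near a contact point (this is precisely Proposition~\ref{p1} and property $(P)$); it does \emph{not} force the contact height $x_n^*$ to be bounded away from $\theta$ by a universal fraction. Your claimed obstruction via the anchor $x_0^-$ fails: the barrier $\Phi_{R,z}$ satisfies $-1 < \Phi_{R,z} \le 1$ everywhere, and $\phi_{R,z}$ is close to $-1$ \emph{inside} $B_R(z)$ (i.e.\ on the concave side of the cap, which is where $x_0^-$ sits once the cap has descended past it). So $\Phi_{R,z}(x_0^-,0) \ge u(x_0^-) \approx -1$ is never violated during the slide, and $x_0^-$ never ``obstructs the first-contact point from reaching the top or bottom.'' In fact the paper explicitly notes that a pointwise argument of this type would suffice \emph{if} $\{u=0\}$ were assumed a graph in the $e_n$ direction; without that assumption, one contact point simply tells you nothing about the rest of the level set, which could simultaneously reach both the top and bottom of the cylinder at other $x'$.

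The paper's proof instead converts the pointwise estimate into a measure estimate and then invokes minimality through an energy comparison, which is the real content of the Harnack inequality for non-graph level sets. Concretely: property $(P)$ (established via Proposition~\ref{p1} exactly as you set it up) feeds into the weak Harnack/ABP-type Propositions 6.2 and 6.4 of \cite{S2}, which show that if $\{u=0\}$ dips near the bottom somewhere in $B'_{l/2}$, then a measure fraction $\ge 1-\mu$ of projections lies in $D_a$ with $\{u=0\}$ near the bottom (\eqref{202}--\eqref{203}), while a fraction $\ge \mu_0$ lies in $D^*_{\tilde a}$ with $\{u=0\}$ near the top (\eqref{204}). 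The overlap of these projections has measure $\ge (\mu_0/2)\mathcal H^{n-1}(B'_{l/2})$, and over each such $x'$ the level set passes through two points separated by $\ge \theta_0/4$ in $x_n$; Lemma~\ref{l2} then shows the energy $\mathcal J(U, \mathcal B^+_{l/2,(x',0)})$ is at least twice $\mathcal J(G,\mathcal B^+_K)$ minus a small error. Integrating in $x'$ gives $\mathcal J(U, A_{l/2}) \ge (1+\mu_0/4)\mathcal J(G,\R^2_+)\mathcal H^{n-1}(B'_{l/2})$, which contradicts the $\Gamma$-convergence upper bound of Lemma~\ref{l1conv}. Your proposal never uses minimality beyond finding the anchors (which is far weaker than what is needed), and it has no mechanism to exclude the bad scenario where $\{u=0\}$ oscillates between the top and the bottom. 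Finally, the paper handles the $\bar q$-ball condition by a short geometric observation once the inclusion $\{u=0\}\cap\mathcal C(l/2,l/2)\subset\mathcal C(l/2,(1-\omega)\theta)$ is in hand, rather than by a second round of sliding.
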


The fact that $u$ is a minimizer of $J$ is only used in a final step of the proof. This hypothesis can be replaced by $x_n$ monotonicity for $u$, or more generally by the monotonicity of $u$ in a given direction which is not perpendicular to $e_n$.  

\begin{defn}
For a small $a>0$, we denote by $ \mathcal D_a$ the set of points on $$\{u=0\} \cap \mathcal C(\frac 34 l, \theta) $$ which have a paraboloid of opening $-a$ and vertex $y=(y',y_n)$  
$$P_{a,y}:=\left\{ x_n=-\frac a 2 |x'-y'|^2 + y_n \right \} $$
tangent by below in $\mathcal C(l, \theta)$, and with $P_{a,y}$ below the lateral boundary of 
$\mathcal C(l, \theta)$. In other words we allow only those polynomials $P_{a,y}$ which exit $\mathcal C(l, \theta)$ through the ``bottom".

We denote by $D_a \subset \R^{n-1}$ the projection of $\mathcal D_a$ into $\R^{n-1}$ along the $e_n$ direction. 

\end{defn}

By Proposition \ref{p1} we see that as long as
\be\label{al}
 l^{-1} \ge a \ge l^{-2-\eta}, \quad \quad \mbox{and} \quad l \ge C(\theta _0),
 \ee 
 for some $\eta$ small universal (depending on $\sigma$), then $\{u=0\}$ has the following property $(P)$:
 
 \
 
$(P)$ In a neighborhood of any point of $\mathcal D_a$ the set $\{u=0\}$ is a graph in the $e_n$ direction of a $C^2$ function with second derivatives bounded by $\Lambda a$ with $\Lambda$ a universal constant.   
 
 \
 
Indeed, since $a \le l^{-1}$, at a point $z \in \mathcal D_a$ the corresponding paraboloid at $z$ has a tangent ball of radius 
$$R:=c a^{-1} \le l^{2+\eta}$$ by below. 
Since $|z'| \le 3/4l$ we see that $\{u=0\} \cap B_{l/4}(z)$ has a tangent ball $B_R(x_0)$ by below at $z$ and hypothesis a) of Proposition \ref{p1} holds since
$$l/4 \ge  R^{\frac 1 2-\sigma}.$$ 

 The assumption that all balls of radius $q \ge c(\theta_0) l^{2-\sigma} \ge R^{1-\sigma} $ tangent by below to 
$C(l,\theta)$ are included in $\{u<0\}$ gives that all balls tangent to $\p B_R(x_0) \cap B_{l/4}(z)$ by below are also included in $\{u<0\}$ hence hypothesis b) of Proposition \ref{p1} holds. 

Since $u$ is a minimizer, in any sufficiently large ball in $\{u<0\}$ we have points that satisfy $u<-1+c$ and hypothesis c) holds as well. In conclusion Proposition \ref{p1} applies and property ($P$) holds.

Since $\{u=0\}$ satisfies property $(P)$ then it satisfies a general version of Weak Harnack inequality which we proved in \cite{S2}. 
In particular we are in the setting of Propositions  6.2 and 6.4 (see also Remark 6.7) in \cite{S2}. 

This means that for any $\mu>0$ small, there exists $M(\mu)$ depending on $\mu$ and universal constants such that if
\be\label{201}
\{ u=0 \} \cap \left  (  B'_{l/2} \times [-\theta, (\omega-1) \theta]  \right  )  \ne \emptyset ,  \quad \quad \omega:= (32 M)^{-1},\ee
then, by Proposition 6.2 in \cite{S2}, we obtain 
\be\label{202}
 \mathcal H^{n-1}(D_a \cap B'_{l/2}) \ge (1-\mu) \mathcal H^{n-1}(B'_{l/2}),  \quad \quad \mbox{with} \quad a:= M \, \omega \, \theta l^{-2},  \ee
and
\be\label{203}
\mathcal D_a \cap \{|x'| \le l/2  \} \subset \{x_n \le (8 M \omega - 1)  \theta \} = \{ x_n \le -3\theta /4\} . \ee
We can apply Proposition 6.2 in \cite{S2} since the interval $I$ of allowed openings of the paraboloids satisfies (see \eqref{al}) 
$$I=[\omega \, \theta l^{-2}, M \omega \, \theta l^{-2}] \subset [l^{-2-\eta}, l^{-1}],$$ 
provided that $l \ge C(\mu, \theta_0)$ and $\eps_0 \le c$. 

Next we let $\mathcal D^*_a$ to denote the set of points on 
\be\label{2035}
\mathcal D_a^*:= \{u=0\} \cap \left(\{|x'| \le l/2\}\times [-\frac 12 \theta, \theta] \right) \ee
 which admit a tangent paraboloid of opening $a$ by above which exists $\mathcal C(l,\theta)$ through the ``top". Also we denote by $D_a^*\subset \R^{n-1}$ the projection of $\mathcal D_a^*$ along $e_n$. Then according to Proposition 6.4 in \cite{S2}, (applied ``up-side down") we have
\be\label{204}
 \mathcal H^{n-1}(D^*_{\tilde a} \cap B'_{l/2}) \ge \mu_0 \, \mathcal H^{n-1}(B'_{l/2}), \quad \quad \mbox{with} \quad \tilde a=8 \theta l^{-2},
\ee
for some $\mu_0$ universal.

We choose $\mu$ in \eqref{201}-\eqref{203} universal as $$\mu := \mu_0 /2.$$ According to \eqref{202}, \eqref{204} this gives
\be \label{205} 
\mathcal H^{n-1}(D_a \cap D^*_{\tilde a}) \ge  \frac{\mu_0}{2} \mathcal H^{n-1}(B'_{l/2}).\ee
Notice that by \eqref{203}, \eqref{2035} the sets $\mathcal D_a$ and $\mathcal D^*_{\tilde a}$ are disjoint.

 At this point we would reach a contradiction (to \eqref{201}) if $\{u=0\}$ were assumed to be a graph in the $e_n$ direction. Instead we use \eqref{205} and show that $U$ cannot be a minimizer.

\

{\it Proof of Theorem \ref{TH}.}

It suffices to show that 
$$\{u=0\} \cap \mathcal C(l/2, l/2) \subset C(l/2, (1-\omega)\theta).$$
Then the existence of the balls of size $q \ll l^2 \theta^{-1}$ (included in $\{u<0\}$ and  $\{u>0\}$ respectively) tangent to $C(l/4, (1-\omega)\theta)$ 
follows easily as we restrict from the cylinder of size $l/2$ to the one of size $l/4$, and the conclusion is satisfied since $\tilde q\le q$. 

\

Assume by contradiction that \eqref{201} holds, and therefore \eqref{202}, \eqref{205} hold as well. 
For each $x \in \mathcal D_a$ the set $\{u=0\}$ has a tangent ball of radius $c a^{-1} \ge c l$ by below. Moreover, the normal to this balls at the contact points and the $e_n$ direction make a small angle which is bounded by $c  \, \, \theta l^{-1} \le c \eps_0$.  According to Lemma \ref{l3} part 2) and Remark \ref{r1}, we conclude that for any fixed constant $K$ we have
\be\label{UG}
\max_{(t,y) \in \mathcal B_K^+}|U(x',x_n+t,y)-G(t,y)|  \le \rho, 
\ee
with $\rho=\rho(K, \eps_0) \to 0$ as $\eps_0 \to 0$.

We denote the 2D  half disk of radius $r$ in the $(x_n,y)$-variables centered at $z \in \R^n$ as 
$$\mathcal B^+_{r,z}:=\{  (z',z_n+t,y)|  \quad |(t,y)| \le r, \quad y \ge 0\}.$$
From above we find for all $x \in \mathcal D_a$, or similarly if $x \in \mathcal D_a^*$, we have
\be\label{JU}
J(U, \mathcal B^+_{K,x}) \ge \mathcal J(G, \mathcal B_K^+) - \bar \rho,
\ee
with $\bar \rho=\bar \rho(K, \eps_0) \to 0$ as $\eps_0 \to 0$.

If $x' \in D_a \cap D^*_{\tilde a}$ then by \eqref{203}, \eqref{2035} the two points $x^1=(x',x^1_n) \in \mathcal D_a$ and $x^2=(x',x^2_n) \in \mathcal D^*_{\tilde a}$ satisfy $x^2_n -x_n^1 \ge \theta /4 \ge \theta_0 /4$. By \eqref{UG} this means that the two disks $\mathcal B_{K,x^i}$ are disjoint provided that $\rho$ is small, thus
$$\mathcal J(U, \mathcal B^+_{l/2,(x',0)}) \ge 2\left( \mathcal J(G, \mathcal  B^+_K) - \bar \rho \right) \quad \quad \mbox{if}
\quad x' \in D_a \cap D^*_{\tilde a}.$$ 

We integrate in $x'$ and use also \eqref{202}, \eqref{205}, \eqref{JU} to obtain
$$\mathcal J\left (U, A_{l/2}\right) \ge (1+ \mu_0/2) \left( \mathcal J(G, \mathcal  B^+_K) - \bar \rho \right) \, \mathcal H^{n-1}(B'_{l/2}),$$
with
$$A_{l/2}:=\mathcal C( \frac l2, \frac l 2) \times [0,\frac l2].$$
We choose first $K$ large and then $\eps_0$ small such that $\bar \rho$ is sufficiently small such that
$$\mathcal J\left (U, A_{l/2}\right) \ge (1+ \mu_0/4) \,  \mathcal J(G, \R^2_+) \, \mathcal H^{n-1}(B'_{l/2}).$$

This contradicts Lemma \ref{l1conv} below provided that $\eps_0$ is taken sufficiently small.

\qed

Next lemma is a $\Gamma$-convergence result and it is a consequence of the minimality of $U$ in $A_{l/2}$.

\begin{lem}\label{l1conv}
\begin{equation}{\label{conv2}}
\mathcal J( U , A_{l/2}) \le \mathcal J(G, \R^2_+) \, \,  \mathcal H^{n-1}(B_{l/2}') + \gamma(\eps_0)\,  l^{n-1} .
\end{equation}
with $\gamma(\eps_0) \to 0$ as $\eps_0 \to 0$.
\end{lem}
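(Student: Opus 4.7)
The plan is to use the minimality of $U$ on $A_{l/2}$ (which sits inside the minimization domain $\mathcal B_q$ in the regime of Theorem \ref{TH}) by constructing an admissible competitor $V$ with $V = U$ on $\partial A_{l/2}$ and an energy obeying the stated bound. The competitor will coincide with a ``straightened'' copy of the 2D profile $G$ on most of $A_{l/2}$ and be glued to $U$ through a cutoff in a thin boundary layer.

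Under the hypotheses of Theorem \ref{TH}, Proposition \ref{p1} applied at each point of $\{u=0\}\cap \mathcal C(l,l)$ gives that the level set is a smooth graph $\{x_n = h(x')\}$ over $B'_l$ with $|h|\le \theta$, $|\nabla' h|\le C\eps_0$ and $|D^2 h|\le C/q$. Set $V^*(x',x_n,y):=G(x_n-h(x'),y)$. A direct computation gives
$$|\nabla V^*|^2 = |\nabla G|^2 + G_t^2\,|\nabla' h|^2$$
(with $G$ and its derivatives evaluated at the shifted argument). By Fubini in the $x'$ variable and a change of variables in the potential integral,
$$\mathcal J(V^*, A_{l/2}) \le \mathcal H^{n-1}(B'_{l/2})\,\mathcal J(G,\R^2_+) + C\eps_0^2\, l^{n-1},$$
the main term coming from extending each 2D slice to $\R^2_+$, and the error from the sliding correction $\int G_t^2|\nabla' h|^2 y^a\le \|\nabla' h\|_\infty^2\,\mathcal H^{n-1}(B'_{l/2})\,\int G_t^2 y^a$.

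To obtain an admissible competitor, let $\eta$ be a Lipschitz cutoff with $\eta\equiv 1$ on $A_{l/2-2}$, $\eta\equiv 0$ outside $A_{l/2}$ and $|\nabla\eta|\le 2$, and put $V := \eta V^* + (1-\eta) U$. Then $V=U$ on $\partial A_{l/2}$, so minimality gives $\mathcal J(U,A_{l/2})\le \mathcal J(V,A_{l/2})$. On $A_{l/2-2}$ one has $V=V^*$, handled by the previous bound. On the boundary strip $A_{l/2}\setminus A_{l/2-2}$, I use
$$|\nabla V|^2 \le C\bigl(|\nabla V^*|^2 + |\nabla U|^2\bigr) + C|\nabla\eta|^2(V^*-U)^2,$$
together with the pointwise approximation $|U - V^*|\le C(K)/q$ on any bounded set of size $K$ (coming from Lemma \ref{l2} and Remark \ref{r1} applied at the contact points produced by sliding the axial barriers $\Phi_{R,z}$). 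The cross term then contributes $O(l^n/q^2)$; the lateral portion of the Dirichlet energy contributes $O(l^{n-2})\,\mathcal J(G,\R^2_+)$ (one full 2D slice per boundary point); and the top/bottom/sky portions are $o(l^{n-1})$ because $|\nabla G|^2 y^a\lesssim r^{-2}y^a$ is integrable at infinity. All these errors fit into $\gamma(\eps_0)\, l^{n-1}$ with $\gamma(\eps_0)\to 0$ as $\eps_0\to 0$, using $l\ge \theta_0/\eps_0\to\infty$.

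The main obstacle is justifying the uniform closeness $|U - V^*|\le C(K)/q$ across the \emph{lateral} strip $|x'|\in[(l-4)/2, l/2]$: Lemma \ref{l2} directly handles neighborhoods of graph points, but one must cover the full 2D slice of size $l$ and also argue in the far region where $|x_n - h(x')|$ or $y$ is comparable to $l$. The first is harmless because $q\gg l$ in the regime of Theorem \ref{TH}, so the constants $C(K)$ may be allowed to grow mildly in $K$ without destroying the estimate; the second is handled by combining the asymptotic $1-|G|\sim r^{-2s}$ with a maximum-principle sandwich to obtain the same decay for $1-|U|$, followed by Schauder estimates for $\triangle_a$ to upgrade $L^\infty$-closeness into the gradient closeness needed for the Dirichlet integrals.
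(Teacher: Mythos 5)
Your overall plan --- construct an admissible competitor coinciding with a $G$-profile in the bulk of $A_{l/2}$, glue it to $U$ near $\partial A_{l/2}$ by a cutoff, and invoke minimality --- is the same as the paper's. The realization differs in two ways, and the second one is where the gap lies.

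First, the paper uses the \emph{un-slid} competitor $V(x,y) := G(x_n,y)$ rather than your $V^*(x,y) := G(x_n - h(x'),y)$. Tracking the graph $h$ is not needed because the paper only requires $|v - u| \le \gamma$ for a fixed small $\gamma$ (not $O(1/q)$), and it only needs this outside a thin strip $\{|x_n| \le C(\gamma) + \theta\}$; this weak closeness is obtained from one-sided barrier comparisons with the functions $\psi_{l/2}$ (as in \eqref{urho}), not from the tight Harnack-type estimate of Lemma~\ref{l2}. The thin strip itself contributes only $O_\gamma(l^{n-2})$ to the gluing error, since the transition region of the cutoff has width $O(1)$ in $x$ for small $y$.

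Second, and more importantly, the paper's cutoff is \emph{cone-shaped}: $\varphi \equiv 1$ on $\mathcal R = \{|x| \le l/2 - 1 - 2y\}$, $\varphi \equiv 0$ outside $A_{l/2}$, so that $|\nabla\varphi| \le 8/(1+y)$. The $y$-decay of $|\nabla\varphi|$ is the essential device. For $y \ge C\gamma^{1/a}$ it makes the cross term harmless even with the crude bound $|V-U|\le 2$, via $\int_Y^\infty (1+y)^{-1} y^a\,dy \le \gamma$ per unit $x'$-slice. Your cutoff $\eta$ with $|\nabla\eta|\le 2$ uniformly in $y$ gives a cross term of order $l^{n+a}\,\|V^*-U\|_{L^\infty(\text{strip})}^2$ over the lateral/sky boundary layer; since $1+a\in(0,1)$ this exceeds $\gamma l^{n-1}$ unless $\|V^*-U\|\lesssim l^{-(1+a)/2}$ uniformly up to heights $y\sim l/2$. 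You propose $|U-V^*|\le C(K)/q$ from Lemma~\ref{l2}/Remark~\ref{r1}, but those give the estimate only in balls $\mathcal B_K$ around contact points with a constant $C(K)$ that a priori grows with $K$, and the extension to $y\sim l$ via maximum principle and Schauder that you sketch is precisely the estimate that would need to be proved --- it is a harder and more delicate bound than anything the paper actually uses. Either carry out that global closeness estimate, or, more simply, adopt the paper's cone-shaped cutoff so that the large-$y$ portion is controlled by the decay of $|\nabla\varphi|$ alone and only the mild barrier estimate is needed for small $y$.
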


\begin{proof}
We interpolate between $U$ and $V(x,y):=G(x_n,y)$ as
$$H=(1-\varphi)U + \varphi V,.$$
Here $\varphi $ is a cutoff Lipschitz function such that
$\varphi =0$ outside $A_{l/2}$, $\varphi=1$ in $\mathcal R$ and $|\nabla  \varphi| \le 8/(1+y)$ in $A_{l/2}\setminus \mathcal R$,  where $\mathcal R$ is the cone
$$\mathcal R:= \{(x,y) | \quad |x| \le l/2 -1 - 2y  \}. $$
By minimality of $U$ we have
$$\mathcal J(U,A_{l/2}) \le \mathcal J(H,A_{l/2})= \mathcal J(V, \mathcal R) + \mathcal J(H, A_{l/2} \setminus \mathcal R).$$
Since $$\mathcal J(V, \mathcal R) \le \mathcal J(V, A_{l/2}) \le \mathcal J(G, \R^2_+)\, \mathcal H^{n-1}(B_{l/2}'),$$
we need to show that 
\be\label{JH}
\mathcal J(H, A_{l/2} \setminus \mathcal R) \le \gamma \, l^{n-1}
\ee with $\gamma$ arbitrarily small.
We have
$$\mathcal J(H, A_{l/2} \setminus \mathcal R) \le 4 \int_{A_{l/2} \setminus \mathcal R} \left( |\nabla \varphi|^2(V-U)^2 + |\nabla (V-U)|^2 \right) y^a  \, dx dy $$
\be\label{JHA}
+ \int_{B_{l/2} \setminus B_{l/2-1}} (v-u)^2 dx.
\ee
We use that $|U|,|V| \le 1$, $|\nabla U|, |\nabla V| \le C/(1+y)$ and we see that in \eqref{JHA} the first integral in the region where $y \ge C \gamma ^ {1/a}$ is bounded by
$$\int_{C \gamma^{1/a}}^{l/2} C_1 (1+y)^{-2} (1+y) \, y^a  dy \le \gamma /4.$$
Next we notice that $u$ and $v$ are sufficiently close to each other in $\mathcal C(l/2,l/2)$ away from a thin strip around $x_n=0$. Indeed,
we can use barrier functions as in Proposition \ref{p1} (see \eqref{urho}) and bound $u$ by above an below in terms of the function $\psi_{l/2}$ and distance to the hyperplanes $x_n=\pm \theta$. This implies that 
$$|v-u| \le \gamma \quad \mbox{in $\mathcal C(l/2,l/2)$ if $|x_n| \ge C(\gamma)+\theta$.}$$
with $C(\gamma)$ large, depending on the universal constants and $\gamma$. 
For the extensions $U$ and $V$ this gives
$$|V-U|, |\nabla (V-U)| \le C_2 \gamma \quad \mbox{in $A_{l/2}$ if $|x_n| \ge C'(\gamma)+\theta$ and $y \le C \gamma^{1/a}$,}$$
with $C_2$ universal. Now \eqref{JH} easily follows from \eqref{JHA}. 

\end{proof}

\section{Improvement of flatness}

We state the improvement of flatness property of minimizers.

\begin{thm}[Improvement of flatness]{\label{c1alpha}}

Let $U$ be a minimizer of $J$ in $\mathcal B_{q}$ and assume that
$$0 \in \{u=0\} \cap \mathcal C(l,l) \subset \mathcal C(l,\theta),$$
and that all balls of radius $q:=(l^2 \theta^{-1})^{1- \frac \sigma 2} $ which are tangent to 
$\mathcal C(l,\theta)$ by below and above are included in $\{u<0\}$ respectively $\{u>0\}$.

Given $\theta_0>0$ there exist $\eta>0$ small depending
on $n$, and $\varepsilon_1(\theta_0) >0$ depending on
$n$, $W$ and $\theta_0$, such that if
$$\theta l^{-1} \le \varepsilon_1(\theta_0), \quad \theta_0 \le \theta,$$
then
$$\{u=0 \} \cap   \mathcal C_\xi (\bar l, \bar l) \subset C_\xi(\bar l, \bar \theta), \quad \quad \bar l:=\eta l, \quad \bar \theta:=\eta^{3/2} \theta,$$
 and all balls of radius $\bar q:=( {\bar l}^2 {\bar \theta}^{-1})^{1- \frac \sigma 2} $ which are tangent to $\mathcal C_\xi(\bar l, \bar \theta)$ by below and above are included in $\{u<0\}$ respectively $\{u>0\}$. 

Here $\xi \in \R^n$ is a unit vector and $C_\xi(\bar l, \bar \theta)$ represents the cylinder with axis $\xi$, base $\bar l$ and height $\bar \theta$.  
\end{thm}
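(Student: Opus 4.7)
The plan is to run the standard compactness-and-contradiction scheme of \cite{S1,S2}: suppose the improvement fails along a sequence; after a parabolic rescaling, the rescaled level sets converge in the Hausdorff sense to the graph of a function $w$ that (in the viscosity sense) has bounded Laplacian; interior regularity for $w$ then supplies the desired improvement, contradicting the hypothesis. Theorem \ref{TH} supplies equicontinuity of the rescaled level sets and Proposition \ref{p2} supplies the limiting equation.

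Concretely, assume by contradiction that the conclusion fails for some $\theta_0>0$: then there is a sequence of minimizers $U_k$ with parameters $l_k,\theta_k$ for which $\eps_k := \theta_k l_k^{-1}\to 0$, $\theta_k\ge\theta_0$, the hypotheses of the theorem hold, and the conclusion fails for a small $\eta$ to be chosen at the end. Pass to rescaled coordinates $(y',y_n) := (x'/l_k, x_n/\theta_k)$ and let $A_k := \{(y',y_n): (l_k y',\theta_k y_n)\in\{u_k=0\}\}\subset B_1'\times[-1,1]$. Iterating Theorem \ref{TH} down the dyadic scales $(l_k 4^{-j},(1-\omega)^j\theta_k)$ while its hypotheses remain in force (in particular $(1-\omega)^j\theta_k\ge\theta_0$) yields uniform H\"older control: on each $B_\rho'\subset B_{1/4}'$ the set $A_k$ is sandwiched between two graphs of $y'$ of $y_n$-oscillation at most $C\rho^\alpha$, with $\alpha := -\log(1-\omega)/\log 4>0$. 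Along a subsequence, $A_k$ converges in Hausdorff sense to the graph of a H\"older function $w:B_{1/4}'\to[-1,1]$ with $w(0)=0$.

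The key step is to identify the equation for $w$. Given a quadratic $P(y')=\tfrac12\sum\tilde a_i y_i'^{2}+\tilde b'\cdot y'$ tangent to $w$ strictly from one side at $0$, Hausdorff convergence turns $P$ (after a vertical shift of size $o(1)$) into a paraboloid $\Gamma_k$ tangent to $\{u_k=0\}$ at a point converging to $0$. Undoing the rescaling, $\Gamma_k$ has the form required by Proposition \ref{p2} at scale $R_k := l_k^{2}/\theta_k$, with openings $a_i = \tilde a_i/R_k$ satisfying $|a_i|\le\eps^{-2}R_k^{-1}$ for $\eps := R_k^{-\sigma/2}$ (since $\tilde a_i$ is bounded). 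Its tangent-ball hypothesis is to be deduced from the tangent-ball assumption on $\mathcal C(l_k,\theta_k)$ combined with the Hausdorff convergence. Since $l_k\ge\theta_0/\eps_k\to\infty$, we also have $R_k\to\infty$. Proposition \ref{p2} then yields $\sum a_i\le CR_k^{-1}$, i.e.\ $\sum\tilde a_i\le C$ with $C$ universal; the symmetric argument (the parallel construction of $G_R$ adapted to the phase $+1$, or applied to $-U_k$) gives the opposite inequality. Hence $w$ satisfies $|\Delta w|\le C$ in the viscosity sense, and by linear elliptic theory $w\in C^{1,\beta}(B_{1/8}')$ for every $\beta<1$, with a universal $C^{1,\beta}$-norm bound $K(\beta)$.

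To finish, set $L(y') := \nabla w(0)\cdot y'$ (using $w(0)=0$); then $|w(y')-L(y')|\le K|y'|^{1+\beta}$ on $B_{1/8}'$. Fix $\beta := 3/4$ and choose $\eta$ small with $\eta<1/8$ and $K\eta^{1+\beta}\le\tfrac12\eta^{3/2}$. For large $k$, uniform convergence $w_k\to w$ yields $|w_k-L|<\eta^{3/2}$ on $B_\eta'$; unrescaled, this is exactly the containment of $\{u_k=0\}\cap\mathcal C(\bar l_k,\bar l_k)$ in the tilted cylinder $\mathcal C_{\xi_k}(\bar l_k,\bar\theta_k)$ with $\bar l_k := \eta l_k$, $\bar\theta_k := \eta^{3/2}\theta_k$ and axis $\xi_k\to e_n$ read off from $\nabla L$ (the tilt $|\nabla L|\eps_k$ goes to zero). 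The tangent-ball condition for the new cylinder at radius $\bar q_k=\eta^{(1-\sigma/2)/2}q_k\le q_k$ follows from the old one by geometric inclusion of the $\bar q_k$-collar of $\mathcal C_{\xi_k}(\bar l_k,\bar\theta_k)$ in the $q_k$-collar of $\mathcal C(l_k,\theta_k)$; this contradicts the failure hypothesis. The main obstacle is the propagation of the tangent-ball hypothesis of Proposition \ref{p2}: one needs Hausdorff convergence of the level sets not just in a compact window but in a rescaled region of size $\sim R_k^{-\sigma/2}$ around the origin, which is precisely what dictates the exponent $1-\sigma/2$ in the definitions of $q$ and $\bar q$.
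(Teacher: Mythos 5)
Your proposal reproduces the architecture of the paper's proof: rescale anisotropically, extract a H\"older limit graph $w$ via iterated application of Theorem~\ref{TH}, identify a PDE for $w$ by touching it with paraboloids and invoking Proposition~\ref{p2}, and then close the contradiction by interior elliptic regularity and uniform convergence, exactly as the paper's ``Claim 1/Claim 2'' scheme. The genuine (though small) departure is in the identification of the PDE. You fix $R_k = l_k^2/\theta_k$ and apply Proposition~\ref{p2} once, obtaining only $|\Delta w|\le C$ (viscosity) with $C$ the universal constant of that proposition, then appeal to $C^{1,\beta}$ regularity for $\beta\in(1/2,1)$. The paper instead takes $R^{-1} := C^{-1}\delta\,\theta_k l_k^{-2}$ and $\eps=\delta^2$, so Proposition~\ref{p2} gives $\operatorname{tr} M \le \delta$ for \emph{every} $\delta>0$; hence $\Delta w = 0$ exactly. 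Both routes are valid and both give the $\eta^{3/2}$ improvement because $3/2 < 1+\beta$ for $\beta>1/2$; but the harmonicity route buys the constant dependence asserted in the statement, namely $\eta = \eta(n)$ (interior $C^2$ estimates for harmonic functions depend only on $n$), while your $\eta$ inherits a dependence on $W$ through the universal $C$ in the $C^{1,\beta}$ bound. If you care about matching the statement verbatim, you should run the scaling-in-$\delta$ version.

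Two smaller remarks. First, the verification of the tangent-ball hypothesis for $\Gamma$ is not really a consequence of Hausdorff convergence, as your closing sentence suggests; in the paper it is a purely geometric inclusion: $\Gamma$ stays inside $\mathcal C(l_k/2,2\theta_k)$ and has tangent balls of radius $\delta^2 l_k^2\theta_k^{-1}\gg q$ by below, so any radius-$q$ ball tangent to $\Gamma$ from below sits below the cylinder and is absorbed by a radius-$q$ ball tangent to $\mathcal C(l_k,\theta_k)$, which is in $\{u<0\}$ by hypothesis. Second, you should also verify the smallness requirement $|b'|\le\eps$ in Proposition~\ref{p2}: with your choice $\eps = R_k^{-\sigma/2}$ this amounts to $\theta_k l_k^{-1}\lesssim (l_k^2/\theta_k)^{-\sigma/2}$, which holds for large $k$ since $\theta_k$ is bounded, but it is worth recording since it constrains the choice of $\eps$ relative to the tilt. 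With these points filled in, your argument is correct and, up to the $\Delta w=0$ vs.\ $|\Delta w|\le C$ distinction, is the paper's own.
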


As a consequence of this flatness theorem we obtain our main theorem.

\begin{thm}{\label{planelike}}
Let $U$ be a global minimizer of $J$. Suppose that the $0$ level set $\{u=0\}$ is asymptotically flat at $\infty$, i.e there exist sequences of positive numbers
$\theta_k$, $l_k$ and unit vectors $\xi_k$ with $l_k \to \infty$,
$\theta_k l_k^{-1}\to 0$ such that
$$\{u=0\} \cap B_{l_k} \subset \{ |x \cdot \xi_k| < \theta_k \}.$$
Then the $0$ level set is a hyperplane and $u$ is one-dimensional.
\end{thm}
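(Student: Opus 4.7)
The plan is to use Theorem \ref{c1alpha} as an iterative improvement-of-flatness scheme. After translating, assume $0 \in \{u=0\}$. Fix a small $\theta_0 > 0$; replacing $\theta_k$ by $\max(\theta_k,\theta_0)$ if necessary, the asymptotic flatness hypothesis gives $\theta_k l_k^{-1} \to 0$ and $\theta_k \ge \theta_0$, so for $k$ large both smallness conditions of Theorem \ref{c1alpha} hold on $\mathcal C_{\xi_k}(l_k,\theta_k)$. (The tangent-ball conditions entering Theorem \ref{c1alpha} are preserved by the density estimates of \cite{SV1}, \cite{SV2} at scales $l_k \to \infty$, and they propagate through the iteration since Theorem \ref{c1alpha} outputs them on the new cylinder.) Applying Theorem \ref{c1alpha} repeatedly yields a chain: at step $j$, $\{u=0\}\cap \mathcal C_{\xi_k^{(j)}}(l_k^{(j)},l_k^{(j)}) \subset \mathcal C_{\xi_k^{(j)}}(l_k^{(j)},\theta_k^{(j)})$ with $l_k^{(j)} = \eta^j l_k$ and $\theta_k^{(j)} = \eta^{3j/2}\theta_k$. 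The iteration continues as long as $\theta_k^{(j)} \ge \theta_0$, so up to $j \le j_k$ with $\theta_k^{(j_k)} \sim \theta_0$ and final scale $L_k := l_k^{(j_k)} \sim l_k(\theta_0/\theta_k)^{2/3}$. Since $l_k/\theta_k \to \infty$ and $\theta_k \ge \theta_0$, we have $L_k \to \infty$.

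Next I pass to the limit in $k$. At each step the axis changes by at most $C\,\theta^{(j)}/l^{(j)}$ (two nested slabs in a common cylinder differ in direction by the ratio of the thinner slab's thickness to the cylinder base), so summing the geometric series yields
$$|\xi_k^{(j_k)} - \xi_k| \le C \sum_j \eta^{j/2}\frac{\theta_k}{l_k} \le C' \frac{\theta_k}{l_k} \to 0.$$
Passing to a subsequence, $\xi_k \to \xi_\infty$ and hence $\xi_k^{(j_k)} \to \xi_\infty$. Since $0 \in \{u=0\}$, the slab at the final step has center within $O(\theta_0)$ of the origin, and any fixed $x_0 \in \{u=0\}$ eventually lies in $\mathcal C_{\xi_k^{(j_k)}}(L_k,L_k)$; letting $k \to \infty$ gives
$$\{u=0\} \;\subset\; \{\,|x\cdot\xi_\infty| \le C\theta_0\,\}.$$

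The key point is that $\xi_\infty$ is determined by the original sequence $\xi_k$ from the hypothesis, independently of the choice of $\theta_0$. Letting $\theta_0 \to 0$ along a sequence therefore collapses the slab and produces $\{u=0\} \subset \{x\cdot\xi_\infty = 0\}$. The open half-spaces $\{\pm x\cdot\xi_\infty > 0\}$ are connected, so $u$ has constant sign on each, and monotonicity across the hyperplane together with the uniqueness of the 2D minimizer $G$ recalled from \cite{PSV} in the preliminaries forces $u(x) = g(x\cdot\xi_\infty)$; this simultaneously yields $\{u=0\}=\{x\cdot\xi_\infty=0\}$ and the one-dimensionality of $u$. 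The principal obstacle is the lower bound $\theta \ge \theta_0$ built into Theorem \ref{c1alpha}, which forbids iterating along a single chain down to arbitrarily small thickness; this is circumvented precisely by fixing $\theta_0$ throughout one chain to reach scale $L_k \to \infty$ with thickness $\theta_0$, and then treating $\theta_0 \to 0$ as an outer limit.
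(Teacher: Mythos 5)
Your argument is correct and follows essentially the same approach as the paper: fix a floor $\theta_0$, iterate Theorem \ref{c1alpha} from the scale $l_k$ down to thickness $\theta_0$ (so the base goes to infinity as $k\to\infty$, i.e.\ as the initial flatness $\theta_k/l_k\to 0$), and then send $\theta_0\to 0$ as an outer limit. You supply some details the paper leaves implicit (the geometric-series control of the drift of the axis and hence the $\theta_0$-independence of $\xi_\infty$, and the passage from a planar zero level set to $u=g(x\cdot\xi_\infty)$), but the underlying scheme is identical.
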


 By saying that $u$ is one-dimensional we understand that $u$ depends only on one direction $\xi$, i.e  $u=g(x \cdot \xi)$.

\begin{proof} Without loss of generality assume $u(0)=0$. Fix $\theta_0>0$, and $\eps \le \eps_1(\theta_0)$. We choose $k$ sufficiently large such that, after increasing $\theta_k$ if necessary we have $\theta_k
l_k^{-1} =  \varepsilon$. We can apply Theorem \ref{c1alpha} since $q=(l_k \eps^{-1})^{1-\frac \sigma 2} \ll l_k$, and we obtain
that $\{ u=0 \}$ is trapped in a flatter cylinder. We apply Theorem
\ref{c1alpha} repeatedly till the
height of the cylinder becomes less than $\theta_0$. We conclude that $\{u=0\}$ is trapped in a cylinder withe flatness less than $\eps$ and height $\theta_0$. We let first $\eps \to 0$ and then $\theta_0 \to 0$ and obtain the desired conclusion.

\end{proof}

\

{\it Proof of Theorem \ref{c1alpha}}

The proof is by compactness and it follows from Theorem \ref{TH} and Proposition \ref{p2}.
Assume by contradiction that there exist $U_k$, $\theta_k$, $l_k$,
$\xi_k$ such that $u_k$ is a minimizer of $J$,
$u_k(0)=0$, and the level
set $\{ u_k=0 \}$ stays in the flat cylinder $\mathcal C(l_k,\theta_k)$
with $\theta_k \ge \theta_0$, $\theta_k l_k^{-1} \to 0$ as $k \to \infty$ for
which the conclusion of Theorem \ref{c1alpha} doesn't hold.\\

Let $A_k$ be the rescaling of the $0$ level sets given by
$$ (x',x_n) \in \{ u_k =0 \} \mapsto (z',z_n) \in A_k$$
$$ z'=x'l_k^{-1}, \quad z_n=x_n \theta_k^{-1}.$$

{\it Claim 1:} $A_k$ has a subsequence that converges uniformly on $|z'|
\le
1/2$ to a set $A_{\infty}=\{(z',w(z')), \quad |z'| \le 1/2 \}$ where $w$
is a Holder continuous function. In other words, given $\varepsilon$, all
but a finite number of the $A_k$'s  from the subsequence are in an
$\varepsilon$ neighborhood of $A_{\infty}$.\\

{\it Proof:} Fix $z_0'$, $|z_0'| \le 1/2$ and suppose $(z_0', z_k) \in
A_k$. We apply Theorem \ref{TH} for the function $u_k$ in the
cylinder
$$ \{ |x'-l_k z_0'| < l_k/2 \} \times \{ |x_n- \theta_k z_k | <
2 \theta_k \}$$
in which the set $\{ u_k =0 \}$ is trapped. Thus, there exist an increasing function $\varepsilon_0(\theta)>0$,
$\varepsilon_0(\theta) \to 0$ as $\theta \to 0$, such that $\{ u_k =0 \}$
is
trapped in the cylinder
$$ \{ |x'-l_kz_0'| <  l_k/8 \} \times \{ |x_n- \theta_k z_k| < 2
(1-\omega) \theta_k \}$$
provided that $4\theta_k l_k^{-1} \le \varepsilon_0(2\theta_k)$. Rescaling back we find that
$$A_k \cap \{|z'-z_0'|\le 1 /8 \} \subset \{ |z_n-z_k| \le 2
(1-\omega) \}.$$
We apply the Harnack inequality repeatedly and we find that
\begin{equation}{\label{c30}}
A_k \cap \{|z'-z_0'|\le 2^{-2m -1} \} \subset \{ |z_n-z_k| \le 2
(1-\omega)^m \}
\end{equation}
provided that
$$ \theta_k l_k^{-1} \le 4^{-m-1} \varepsilon_0 \left (2(1-\omega)^m
\theta_k \right).$$
Since these inequalities are satisfied for all $k$ large we conclude that
(\ref{c30}) holds for all but a finite number of $k$'s.
Now the claim follows from Arzela-Ascoli Theorem.\\

{\it Claim 2:} The function $w$ is harmonic (in the viscosity sense).\\

{\it Proof:} The proof is by contradiction. Fix a quadratic polynomial
$$z_n=P(z')=\frac{1}{2}{z'}^TMz' + \xi \cdot z', \quad \quad \|M \|
< \delta^{-1}, \quad |\xi| <\delta^{-1}$$
such that $ tr \, M >\delta $, $P(z')+ \delta  |z'|^2$ touches
the graph of $w$, say, at $0$ for simplicity, and stays below $w$ in
$|z'|<8\delta$, for some small $\delta$. 
Notice that at all points in the cylinder $|z'|<2\delta$, the quadratic polynomial above admits a tangent paraboloid by below of opening $- \delta^{-2}$ which is below $z_n=-2$ when $|z'| \ge 6 \delta$. 

Thus,
for all $k$ large we find points $({z_k}',{z_k}_n)$ close to $0$ such that
$P(z')+ const$ touches $A_k$ by below at $({z_k}',{z_k}_n)$ and stays
below it in $|z'-{z_k}'|< \delta$.\\
This implies that, after eventually a translation, there exists a surface
$$\Gamma:=\left \{ x_n =
\frac{\theta_k}{l_k^2} \frac{1}{2} {x'}^T M x'+ \frac{\theta_k}{l_k} \xi_k
\cdot x'
\right \}, \quad |\xi_k|<2 \delta^{-1}$$
that touches $\{ u_k = 0 \}$ at the origin and stays below it in $\mathcal C(\delta l_k, 2 \theta_k)$. 
Moreover in the cylinder $\mathcal C(l_k/2, 2 \theta_k)$ the surface $\Gamma$ admits at all points with $|x'| \le \delta l$ a tangent ball by below of radius $\delta^2 l_k^2 \theta_k^{-1} \gg q$. In view of our hypothesis we conclude that $\Gamma \cap B_{\delta l_k}$ admits at all its points a tangent ball of radius $q$ by below which is included in $\{u<0\}$.

We contradict Proposition \ref{p2} by choosing $R$ as
$$ R^{-1}:=C^{-1} \, \delta \, \theta_k l_k^{-2} ,$$ with $C$ the constant from Proposition \ref{p2} and with $\eps = \delta^2 $. 
Then for all large $k$ we have
$$ \theta_k l_k^{-1}|\xi_k| \le \eps, \quad \quad \theta_k l_k^{-2} \|M\| \le \eps^{-2} R^{-1}, \quad \quad  \delta l_k \ge R^{\frac 12 -\sigma}, \quad q \ge R^{1-\sigma},$$
and Proposition \ref{p2} applies. We obtain $tr \, M \le \delta$ and we reached a contradiction.

\

Since $w$ is harmonic, there exist $0 < \eta$ small
depending only on $n$ such that
$$|w-\xi \cdot z'| < \frac 12 \eta^{3/2} \quad \quad \mbox{ for $|z'|<2 \eta$ },$$
and the parabolas of opening $-C$ tangent by below (and above) to
$$z_n= \xi \cdot z' \pm \frac 12 \eta^{3/2}$$
 in the cylinder $|z'|< 2 \eta$ lie below (or above) to the graph of $w$. 

Rescaling back and using the fact that $A_k$ converge uniformly to the
graph of $w$ and that $\bar q < q$ we easily conclude that $u_k$ satisfies the
conclusion of the Theorem \ref{c1alpha} for $k$ large enough, and we reached a contradiction.

\qed

\end{document}